\DeclareMathAlphabet{\mathbbold}{U}{bbold}{m}{n}    % nuovo alfabeto per i numeri in bold.
\newcommand{\bee}{\begin{equation}}
\newcommand{\eee}{\end{equation}}
\newtheorem{theorem}{Theorem}
\newtheorem{lemma}[theorem]{Lemma}
\newtheorem{proposition}[theorem]{Proposition}
\newtheorem{corollary}[theorem]{Corollary}
\theoremstyle{definition}
\newtheorem{definition}[theorem]{Definition}
\theoremstyle{remark}
\newtheorem{rmk}{Remark}
\newtheorem{remark}[rmk]{Remark}
\newtheorem*{example*}{Example}
\newcommand{\distr}{D}
\newcommand{\metr}{g}
\newcommand{\g}{\gamma}
\newcommand{\beq}{\begin{equation}}
\newcommand{\eeq}{\end{equation}}
\newcommand{\R}{\mathbb{R}}
\renewcommand{\H}{\mathbb{H}}
\newcommand{\eps}{\varepsilon}
\newcommand{\lam}{\lambda}
\newcommand{\set}{\Omega}
\title[Volume of small balls in 3-dimensional contact manifolds]{Volume of small balls and sub-Riemannian curvature in 3D contact manifolds}
\date{\today}
\author{Davide Barilari}
\email{\href{mailto:davide.barilari@imj-prg.fr}{\nolinkurl{davide.barilari@imj-prg.fr}}}
\address{Univ.\ Paris Diderot, Institut de Math\'ematiques de Jussieu-Paris Rive Gauche, CNRS, Sorbonne
Universit\'e. B\^atiment Sophie-Germain, case 7012, 75205 Paris cedex 13, France}
\author{Ivan Beschastnyi}
\email{\href{mailto:i.beschastnyi@gmail.com }{\nolinkurl{i.beschastnyi@gmail.com }}}
\address{SISSA, Via Bonomea, 265, 34136 Trieste TS, Italia}
\author{Antonio Lerario}
\email{\href{mailto:lerario@sissa.it}{\nolinkurl{lerario@sissa.it}}}
\address{SISSA, Via Bonomea, 265, 34136 Trieste TS, Italia}
\begin{document}
	\begin{abstract}
		We compute the asymptotic expansion of the volume of small sub-Riemannian balls in a contact 3-dimensional manifold, and we express the first meaningful geometric coefficients in terms of geometric invariants of the sub-Riemannian structure. 
		%\comm{Think if say that this is the first general situation where this is computed}
			\end{abstract}

	\maketitle
%	\setcounter{tocdepth}{1}
%	\tableofcontents
	
	\section{Introduction}
	Let $M$ be an $n$-dimensional Riemannian manifold and for $p\in M$ and $\eps>0$ let us denote by $B(p, \eps)$ the Riemannian ball of radius $\eps$ centered at $p$, i.e., the set of points in $M$ at distance at most $\eps$ from $p$.
	A classical result allows to write the asymptotic expansion of the Riemannian volume of $B(p, \eps)$ in terms of the volume $\beta_n$ of the unit ball in $\R^n$ and the scalar curvature $s(p)$ of $M$ at the point $p$:
	\beq\label{eq:smRiem}
	\mathrm{vol}(B(p,\eps))=\beta_n\,  \eps^{n}\left(1- \frac{s(p)}{6(n+2)} \eps^{2}+O(\eps^{3})\right).
	\eeq
	This formula says that to the leading order the volume of $B(p,\eps)$ coincides with the volume of the $\eps$-ball in the model space $\R^n$ and the first correction term (which is quadratic in $\eps$) depends on the curvature of $M$ at $p$. The purpose of this paper is to derive an analogue formula in the case of a $3$-dimensional contact sub-Riemannian manifold. 
	
	To be more specific, let $M$ be a $3$-dimensional manifold, $\distr\subset TM$ be a contact distribution and $g$ be a metric on $\distr$. For $p\in M$ and $\eps>0$ let us denote by $B(p, \eps)$ the \emph{sub-Riemannian} $\eps$-ball centered at $p$, i.e., the set of points in $M$ reached by a horizontal curve sorting from $p$ and of length at most $\eps.$ 
	The model space for $M$ is the $3$-dimensional Heisenberg group $\mathbb{H}^3$. 
	
	In this framework one can consider a natural volume form on $M$, called the \emph{Popp volume}, which is defined as follows. First, we write $\distr=\ker(\omega)$ for a one-form $\omega$ normalized such that $d\omega|_\distr$ coincides with the area form of $g$. If $X_{0}$ denotes the Reeb vector field of $\omega$ and $\{X_1, X_2\}$ is an oriented orthonormal basis for $\distr$, the Popp volume form is the unique $3$-form $\mu$ such that $\mu(X_1, X_2, X_{0})=1$. Notice that $\mu=\omega\wedge d\omega$. We denote by $\mathrm{vol}(A)=\int_{A}\mu$. 
	
	In the sub-Riemannian context, we obtain an expansion completely analogous to \eqref{eq:smRiem}, where now the Riemannian scalar curvature is replaced by a sub-Riemannian curvature term. On a 3-dimensional contact sub-Riemannian manifold  one can introduce two curvature functions denoted $\chi$ and $\kappa$ (see Section~\ref{s:teching}). Observe that only the latter appears in the first terms of the asymptotics of the volume of small balls.
	
	\begin{theorem}\label{t:main} Let $(M, \distr, \metr)$ be a $3$-dimensional, contact sub-Riemannian manifold and $p\in M$. As $\eps\to 0$ the Popp volume of the sub-Riemannian $\eps$-ball centered at $p$ has the following asymptotics: 
\beq\label{eq:srballs}
\mathrm{vol}(B(p,\eps))=c_0 \, \eps^{4}\left(1-c_{1} \kappa(p) \eps^{2}+O(\eps^{3})\right),\eeq
where $c_{0}$ is the volume of the unit ball in the Heisenberg group $\mathbb{H}^3$. Explicitly 
$$c_{0}=\frac{1}{12}(1+2\pi \mathrm{Si}(2\pi))\quad \textrm{and}\quad c_{1}=\frac{1}{c_0160}\left(2+4\pi \mathrm{Si}(2\pi)-\frac{1}{\pi^2}\right)>0$$ 
where
$\mathrm{Si}(x)=\int_{0}^{x}\frac{\sin t}{t}dt $ denotes the sine integral function.
\end{theorem}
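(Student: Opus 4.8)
\section{Proof of Theorem~\ref{t:main} (proposal)}

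The plan is to compute the volume by integrating the Jacobian of the sub-Riemannian exponential map over the set of initial covectors producing minimizing geodesics of length at most $\eps$. First I would parameterize the unit-speed geodesics emanating from $p$: on the energy level $\{H=\tfrac12\}\subset T_p^*M$, with $H=\tfrac12(h_1^2+h_2^2)$ the sub-Riemannian Hamiltonian and $h_i$ the momentum functions of $X_i$, one writes the initial covector as a pair $(\theta,w)$, where $\theta\in[0,2\pi)$ fixes the horizontal direction $(h_1,h_2)=(\cos\theta,\sin\theta)$ and $w=h_0$ is the vertical momentum. Writing the exponential map $E(t,\theta,w)=\exp_p(t;\theta,w)$ in privileged coordinates adapted to the flag $\distr_p\subset T_pM$, the Popp volume of the ball becomes
\beq
\mathrm{vol}(B(p,\eps))=\int_{\set_\eps}|\det dE|\,dt\,d\theta\,dw,
\eeq
where $\set_\eps$ is the set of triples $(t,\theta,w)$ with $t\le\eps$ lying inside the cut locus, and $\det dE$ is taken with respect to $\mu$.

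Next I would exploit the nilpotent structure of the metric tangent cone, which here is the Heisenberg group $\H^3$. In privileged coordinates the dilations act with weights $(1,1,2)$, so the homogeneous dimension is $4$: this is the source of the factor $\eps^4$. Rescaling $t=\eps s$ and $w=v/\eps$ keeps the product $wt=sv$ bounded, which is the natural variable since along a $\H^3$-geodesic with vertical momentum $w$ the pair $(h_1,h_2)$ rotates with frequency $w$ and the first conjugate (and cut) time is $2\pi/|w|$. Under this rescaling the leading-order exponential map is exactly the Heisenberg one $\wt E$, and evaluating $\int|\det d\wt E|$ over the rescaled domain $\{s\le 1,\ |v|s\le 2\pi\}$ reproduces the volume of the unit ball in $\H^3$, giving the explicit constant $c_0=\tfrac{1}{12}(1+2\pi\,\mathrm{Si}(2\pi))$.

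To obtain the correction term I would expand $E$ and its Jacobian one order further in $\eps$. The cleanest way to organize this is through the Jacobi equation: the differential $dE$ along each geodesic is governed by the linearized Hamiltonian flow, whose normal form on a $3$-dimensional contact structure is controlled precisely by the two curvature functions $\chi$ and $\kappa$ from Section~\ref{s:teching}. Solving this linear system perturbatively around the Heisenberg solution should yield $|\det dE|=|\det d\wt E|\bigl(1+\eps^2\,[\,a(s,v,\theta)\,\chi(p)+b(s,v,\theta)\,\kappa(p)\,]+O(\eps^3)\bigr)$ for explicit functions $a,b$.

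The main obstacle, and the step I expect to be most delicate, is then twofold. First, one must verify that after integrating the correction over $\theta\in[0,2\pi)$ the coefficient of $\chi(p)$ vanishes: the $\chi$-term is tied to the non-rotationally-symmetric part of the structure and should cancel by parity in $\theta$, leaving only $\kappa(p)$ in the answer and explaining why $\chi$ does not appear. Second, one must control the cut locus uniformly in $\eps$, so that $\set_\eps$ is correctly described by the conjugate-time bound for all small $\eps$ and the interchange of expansion with integration is justified. Granting these, evaluating the surviving $\kappa$-integral of $b(s,v,\theta)$ against the Heisenberg density over the rescaled domain produces the transcendental constant $c_1=\tfrac{1}{160\,c_0}\bigl(2+4\pi\,\mathrm{Si}(2\pi)-\tfrac{1}{\pi^2}\bigr)$, whose positivity follows from its explicit value.
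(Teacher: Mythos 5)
Your strategy is essentially the paper's own (rescaling to the Heisenberg tangent cone in normal coordinates, second-order expansion of the Jacobian of the exponential map, integration over the region bounded by the cut time), but there is a genuine gap at precisely the step you describe as delicate, and your proposed way around it would fail. The domain $\set_\eps$ is \emph{not} the Heisenberg conjugate-time region $\{s\le 1,\ |v|s\le 2\pi\}$ up to negligible errors: the cut time of the structure admits the expansion
\begin{equation}
t_{cut}(1,\theta,w)=\frac{2\pi}{|w|}-\frac{\pi\left(\kappa(p)+2\chi(p)\sin^{2}\theta\right)}{|w|^{3}}+O\left(\frac{1}{|w|^{4}}\right),
\end{equation}
so in the rescaled variables the boundary of the integration domain is displaced by $O(\eps^{2})$ --- exactly the order of the correction you are computing. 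This displacement produces a boundary term in the $\eps^{2}$ coefficient, and no parity in $\theta$ disposes of it: the $\theta$-average of $\kappa+2\chi\sin^{2}\theta$ is $\kappa+\chi$ (up to the factor $2\pi$), so if this term survived, $\chi(p)$ would enter the final answer and the claimed formula for $c_1$ would be wrong. Your parity argument is correct only for the Jacobian correction itself, whose non-$\kappa$ part is a combination of $\cos 2\theta$ and $\sin 2\theta$ and integrates to zero; it says nothing about the cut-time displacement.

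The missing idea is that the boundary term vanishes for a different reason: shrinking the $w$-interval to $[-2\pi+\eps^{2}\rho^{2}f(\theta),\,2\pi-\eps^{2}\rho^{2}f(\theta)]$, with $f(\theta)=\left(\kappa+2\chi\sin^{2}\theta\right)/4\pi$, produces an extra term proportional to $\eps^{2}\rho^{2}f(\theta)\left(u_{0}(\rho,\theta,-2\pi)+u_{0}(\rho,\theta,2\pi)\right)$, where $u_{0}$ is the leading (Heisenberg) Jacobian; and
\begin{equation}
u_{0}(\rho,\theta,w)=\det(J\exp_p^{0})(\rho,\theta,w)=\rho^{3}\,\frac{2-2\cos w-w\sin w}{w^{4}}
\end{equation}
vanishes at $w=\pm 2\pi$, because in the Heisenberg group the cut time coincides with the first conjugate time. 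This is the crucial cancellation in the paper's computation; only after it does the answer reduce to the $\theta$-averaged Jacobian correction (your $b$-term, together with the Popp-density factor $-2\gamma^{[2]}$, which you correctly folded into the Jacobian relative to $\mu$), yielding $c_{1}$. Note also that the cut-time asymptotics displayed above is itself a nontrivial input (the paper imports it from Agrachev's analysis of the 3D contact exponential map); your proposal needs it, or an equivalent statement, even to define $\set_\eps$ to second order, so "uniform control of the cut locus" must be made quantitative in exactly this form.
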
	
A numerical estimate of the constant appearing in the statement is given by $c_{0}\approx 0.826$ and  $c_{1}\approx 0.149$.

The expansion \eqref{eq:srballs} can be used as a definition of scalar curvature for a 3D contact sub-Riemannian structure (see Section \ref{sec:related} below).

\subsection{Connection with small time heat kernel asymptotics}
It is interesting to observe that the behavior of the volume of small balls is strictly related to the small time asymptotics of the heat kernel on the manifold. It is well-known, in fact,  that for the heat kernel $e(t,x,y)$ associated with the sub-Riemannian Laplacian, the following estimate holds for $t>0$ small enough
\begin{equation} \label{eq:gaussian}
\frac{C_{1}}{\mathrm{vol}(B(p, t^{1/2}))} \leq e(t,p,p) \leq \frac{C_{2}}{\mathrm{vol}(B(p, t^{1/2}))}.
\end{equation}
This estimate follows from more general off-diagonal Gaussian estimates on the heat kernel, investigated first in the sub-Riemannian setting in  \cite{JSC87,JSC86,SC84} and then refined in several subsequent papers in the literature. The estimate \eqref{eq:gaussian} roughly says that the main order term 
of the expansion of $\mathrm{vol}(B(p, t^{1/2}))^{-1}$ and $e(t,p,p)$ for small $t$ is the same. 

Actually, in the Riemannian case, one can observe  a stronger relationship between the two asymptotics, since the following expansion holds for $t\to 0$
\begin{equation} \label{eq:epp}
e(t,p,p)=\frac{1}{(4\pi t)^{n/2}}\left(1+\frac{s(p)}{6}t+O(t^{2})\right).
\end{equation}
This shows that even in the first order correction term the expansions \eqref{eq:smRiem} and \eqref{eq:epp} contain the same geometric invariant. 

Theorem \ref{t:main} compared with the results obtained in \cite{BAR13} confirms that the same analogies remain true in the sub-Riemannian setting, at least for 3D contact structures.

Concerning higher-dimensional structures, only partial results are known even the contact case: the small time heat kernel asymptotics for contact structures with symmetries have been obtained in \cite{BGS84,ST84} and the first coefficient has been related to the scalar Tanaka-Webster curvature. See also \cite{baudoin2013subelliptic,baudoin2014subelliptic,W16} for a recent account on heat kernel asymptotics on higher-dimensional sub-Riemannian model spaces.

\subsection{On the strategy of the proof} Even if the question of computing the asymptotics of the volume of small sub-Riemannian balls seems very natural, this is the first paper where this question is investigated in the literature. This is related to the fact that the classical ingredients are not available in sub-Riemannian geometry, as we now explain. 

A first obstruction is that the sub-Riemannian exponential map, parametrizing arclength geodesics, is defined on a non-compact set (homeomorphic to a cylinder) and is never a local diffeomorphism at zero. As a consequence, balls are not smooth, even for small radii, preventing a uniform description of the injectivity domain for the exponential map in the cotangent space. In other terms, an information on the cut locus starting from a point (that is always adjacent to the point itself) is necessary to have a correct description of balls through the exponential map.

Another obstacle in the computation of the asymptotic expansion above is that balls are not geodesically homogeneous in sub-Riemannian geometry: if one shrinks a balls to its center 
along geodesics, one does not obtain a ball of the corresponding radius. More precisely, defining $\Phi_{p,t}$ the maps that sends $x\in M$ to the point at time $t$ along the unique geodesic joining $p$ with $x$ in time 1 (this map is well defined for a.e.\ $x\in M$ on a contact sub-Riemannian manifold) one has that $\Phi_{p,t}(B(p,r))\subsetneq B(p,tr)$, with strict inclusion. It is possible actually to show that, on every 3-dimensional contact sub-Riemannian manifold, when $t\to 0$ the quantity $\mathrm{vol}(\Phi_{p,t}(B(p,r))$ goes to zero as $t^{5}$, while $\mathrm{vol}(B(p,tr))$ tends to zero as $t^{4}$. Hence, even if curvature-like invariants can be extracted by looking at the variation of a smooth volume under the geodesic flow (see \cite{ABP-distortion}), this does not permits to get the volume of small balls.

To overcome these problems we use a perturbative approach: i.e., we describe the original contact structure around a fixed point $p\in M$ as the perturbation of the Heisenberg sub-Riemannian structure, that is the metric tangent structure at a fixed point. This procedure relies on the so called nilpotent approximation of the sub-Riemannian structure and the use of a version of normal coordinates for the three dimensional exponential map developed in \cite{agrexp,ACG96,ECG96}.
This permits us to compute the asymptotic expansion of all ingredients that are involved in the computation of the volume of the ball (the exponential map, the cut time for geodesics, the Popp volume) and obtain Theorem~\ref{t:main} without explicit computations of the cut locus.

\subsection{The notion of sub-Riemannian curvature and related work}\label{sec:related}In the general case, it is not straightforward to define a notion of scalar curvature associated with a given sub-Riemannian structure.  A general approach to curvature for general sub-Riemannian strutures   has been developed in \cite{MemAMS,BR16,BR17,BR-connection}. Here a notion of generalized \emph{sectional} curvature is obtained through horizontal derivatives of the distance functions and a scalar curvature can be built by considering the trace of its horizontal part. In some specific cases, when a canonical connection (in general with non-zero torsion) associated with the metric is available,  one can also introduce curvature through this connection, as done  for instance in \cite{BG17,B-EMS, hughen, falbel}. In the 3D contact case discussed in this paper, all these approaches coincide, and define the same scalar curvature invariant $\kappa$ (up to a constant). We relate our invariants to Hughen's ones in Proposition \ref{prop:hughen} and to Falbel-Gorodski's ones in Remark \ref{rem:falbel} below.
In particular we observe that using the results of \cite{hughen} it is possible to give an alternative derivation of the asymptotic expansion of the Popp volume in exponential coordinates (see Remark \ref{rem:hughen29} below).

\subsection*{Acknowledgements}	
	This research has been supported by the ANR project SRGI ``Sub-Riemannian Geometry and Interactions", contract number ANR-15-CE40-0018. We would like to thank the anonymous referee for her/his helpful and constructive comments.	
	 \section{Technical ingredients} \label{s:teching}
A \emph{sub-Riemannian structure} on a manifold $M$ is a pair $(D,g)$ where $D$ is a vector distribution, i.e., a subbundle  of the tangent bundle $TM$, and $g$ is a smooth metric defined on $D$. It is required that $D$ is \emph{bracket generating}, i.e., Lie brackets of vector fields tangent to $D$ span the full tangent space to $M$ at every point.

Under this assumptions there is a well-defined \emph{sub-Riemannian (or Carnot-Carath\'eodory) distance} $d$, namely $d(p,q)$ is the infimum of the length of Lipschitz curves joining two points $p$ and $q$ and that are tangent to $D$ (also called horizontal curves). Here the length of the curve is computed with respect to the metric $g$. We refer to \cite{nostrolibro} for a comprehensive presentation. 

We will also say that the triplet $(M,\distr,g)$ is a sub-Riemannian manifold, when $(D,g)$ is a sub-Riemannian structure on a smooth manifold $M$.
 \subsection{Contact sub-Riemannian manifold} 
\label{s:csrm}  Let $M$ be a 3-dimensional manifold. A sub-Riemannian
structure $(D,g)$ on $M$ is said to be \emph{contact} if $\distr$ is a
contact distribution, i.e., $\distr= \ker \omega$, where $\omega\in\Lambda^{1}(M)$ satisfies $d\omega \wedge\omega \neq 0$. 

A contact distribution is bracket generating and endows $M$ with a canonical orientation. 
%It is convenient to recall the following characterization of contact distributions in terms of local  frames.
%\begin{lemma} \label{lem:vol}Let $M$ be a 3-dimensional smooth manifold, $\omega\in \Lambda^{1}(M)$ and set $\distr:=\ker \omega$. The following are equivalent:
%\begin{itemize}
%\item[(i)] $D$ is a contact distribution,
%\item[(ii)] $d\omega \big|_{\distr}\neq0$,
%\item[(iii)] if $X_1,X_2$ is a local frame that spans $D$, then $[X_1,X_2]$ is transversal to $\distr$.
%\end{itemize}
In what follows, if $(D,g)$ is a given sub-Riemannian structure on $M$, we always normalize the contact form $\omega$ in such a way that $d\omega \big|_{\distr}$ coincides with the Euclidean volume  defined on $\distr$ by $g$.
%\end{lemma}
%Given a contact sub-Riemannian structure on $M$, we always assume that the contact form $\omega$ is normalized  by the metric $g$ as in Lemma~\ref{lem:vol}.

\begin{remark}
%By Lemma \ref{lem:vol}, 
It is not restrictive to fix a sub-Riemannian contact structure by a pair of everywhere linearly independent vector fields  $X_{1},X_{2}$ such that $X_{1},X_{2},[X_{1},X_{2}]$ is a basis of the tangent space at every point, and declaring $X_{1},X_{2}$ to be an orthonormal frame for $g$ on the distribution $D=\mathrm{span}\{X_{1},X_{2}\}$. 
\end{remark}
The \emph{Reeb vector field} associated with the contact structure is the unique vector field $X_{0}$ satisfying 
\begin{equation}\label{eq:reeb}
X_{0}\in \ker d\omega,\qquad \omega(X_0)=1.
\end{equation} 
The Reeb vector field depends only on the sub-Riemannian structure, and its orientation.

Given an orthonormal frame $X_{1},X_{2}$ for the sub-Riemannian structure $(\distr,g)$ there exists smooth functions $c_{ij}^{k}$ defined on $M$ such that 
\begin{align} \label{eq:algebracampi}
[X_1,X_0]&=c_{01}^1 X_1+c_{01}^2 X_2, \notag\\
[X_2,X_0]&=c_{02}^1 X_1+c_{02}^2 X_2, \\
[X_2,X_1]&=c_{12}^1 X_1+c_{12}^2 X_2+X_0, \notag
\end{align}
The particular structure of the equations \eqref{eq:algebracampi} are obtained from the properties \eqref{eq:reeb} of the Reeb vector field by applying Cartan formula. In particular one can prove from $e^{tX_{0}}_{*}\distr=\distr$ that $c_{01}^{1}+c_{02}^{2}=0$

\begin{definition} We define the following quantities in terms of the structural equations \eqref{eq:algebracampi} of the orthonormal frame
\begin{itemize}
\item[(a)] the invariant $\chi$ defined by
\begin{equation} \label{eq:defchi2}
\chi=\sqrt{\frac{(c_{01}^{2}+c_{02}^{2})^{2}}{4}+(c_{01}^{1})^{2}},
\end{equation}
\item[(b)] the invariant $\kappa$ defined by
\begin{equation} \label{eq:defkappa}
\kappa=X_2(c_{12}^1)-X_1(c_{12}^2)-(c_{12}^1)^2-(c_{12}^2)^2+
\dfrac{c_{01}^2-c_{02}^1}{2}.
\end{equation}
\end{itemize}
Notice that $\chi$ and $\kappa$ are smooth functions defined on $M$.
\end{definition}
 
\begin{remark} \label{r:chipositivo} We list here some properties of the coefficients just introduced. More details are provided in \cite{nostrolibro} and \cite[Section 7.5]{MemAMS} (cf.\ also \cite{agrexp, miosr3d} and references below)
\begin{itemize}
\item[(i)] These coefficients have been first introduced in \cite{AAAICM,agrexp}. A direct calculation shows that $\chi$ and $\kappa$ are independent
of the orthonormal frame $X_1,X_2$ on the distribution and are hence local metric invariant. Indeed, Theorem \ref{t:main} also shows independently that $\kappa$ is a metric invariant.
\item[(ii)] It is possible to introduce a canonical connection $\nabla$ on a 3-dimensional contact manifold. The functions $\chi$ and $\kappa$ are expressed in terms of $\nabla$ as follows:  
\begin{equation}
\chi=\sqrt{-\mathrm{det}\,T^{\nabla}(X_{0},\cdot)},\qquad \kappa=R^{\nabla}(X_{1},X_{2},X_{2},X_{1}),
\end{equation} 
where $T^{\nabla}$ and $R^{\nabla}$ respresents the torsion and the curvature tensor associated with $\nabla$, respectively. More details on the canonical connection are provided in Appendix~\ref{s:appendix}.

\item[(iii)] One can show that $\chi\geq0$, and $\chi$ vanishes everywhere if and only if the flow of the Reeb vector field $X_{0}$ is a flow of sub-Riemannian isometries for $M$. When $\chi=0$ identically, the sub-Riemannian structure can be represented as an isoperimetric problem on a two-dimensional Riemannian manifold $N$, and $\kappa$ represents the Gaussian curvature on $N$.
\item[(iv)] The functions $\chi$ and $\kappa$ are invariant by local isometries and they are constant functions for
left-invariant structures on Lie groups. In particular if $\chi=\kappa=0$ the structure is local isometric to the Heisenberg group. See \cite{miosr3d,falbel} for the classification of left-invariant structures in terms of these invariants.
\end{itemize}
\end{remark}

	\subsection{Normal coordinates}\label{sec:coord}

The basic example of contact sub-Riemannian structure in dimension three is the Heisenberg group: this is the sub-Riemannian structure defined by the orthonormal frame in $\R^{3}$
\begin{equation} \label{eq:hhh}
\widehat{X}_{1}=\partial_{x}-\frac{y}{2}\partial_{z}, \qquad
\widehat{X}_{2}=\partial_{y}+\frac{x}{2}\partial_{z}.
\end{equation}
Notice that the normalized contact form and the corresponding Reeb vector field for this structure are 
\begin{equation}
\widehat\omega=-dz-\frac{y}{2}dx+\frac{x}{2}dy,\qquad \widehat X_{0}=-\partial_{z}.
\end{equation}

	For a general 3-dimensional contact sub-Riemannian structure, there exists a smooth normal form of the sub-Riemannian structure (i.e., of its orthonormal frame) which is the analogue of normal coordinates in Riemannian geometry. In this coordinates a general 3-dimensional contact sub-Riemannian structure is presented as  a perturbation of the Heisenberg group. 

\begin{theorem}[\cite{ACG96,ECG96}] 
\label{t:normal}
Let $M$ be a 3-dimensional contact sub-Riemannian manifold and $X_{1},X_{2}$ a local orthonormal frame. Fix $p\in M$. Then there exists a smooth coordinate system $(x,y,z)$ around $p$ such that $p=(0,0,0)$ and 
\begin{align} \label{eq:coordinate1}
X_{1}&=(\partial_{x}-\frac{y}{2}\partial_{z})+\beta y (y\partial_{x}-x\partial_{y})-\g \frac{y}{2} \partial_{z},\\[0.1cm]
X_{2}&=(\partial_{y}+\frac{x}{2}\partial_{z})-\beta x (y\partial_{x}-x\partial_{y})+\g \frac{x}{2} \partial_{z}, \label{eq:coordinate2}
\end{align}
where $\beta=\beta(x,y,z)$ and $\gamma=\gamma(x,y,z)$ are smooth functions  satisfying the following boundary conditions
\begin{equation}\label{eq:coordinate3}
\beta(0,0,z)=\g(0,0,z)=\frac{\partial \g}{\partial x}(0,0,z)=\frac{\partial \g}{\partial y}(0,0,z)=0.
\end{equation} 
\end{theorem}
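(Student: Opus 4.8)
The plan is to construct the coordinates by a two-stage normalization. First I would produce a preliminary coordinate system adapted to the contact structure, and then use the residual freedom in the choice of coordinates to kill the unwanted lower-order terms and arrange the frame in the stated form.

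\textbf{Step 1: privileged coordinates and the leading Heisenberg part.} I would start from the structural equations \eqref{eq:algebracampi} and build a system of privileged coordinates at $p$. Concretely, pick the local orthonormal frame $X_1, X_2$ together with the Reeb field $X_0$, and use the flow of these fields to define coordinates: set $(x,y,z)$ so that a point is reached by flowing along $X_1, X_2, X_0$ by the corresponding amounts (an exponential-type chart). In such privileged coordinates the distribution has weights $(1,1,2)$ with $x,y$ of weight $1$ and $z$ of weight $2$, and the nilpotent approximation of the frame is precisely the Heisenberg frame $\widehat X_1, \widehat X_2$ of \eqref{eq:hhh}. Thus in any privileged chart one may write
\begin{align}
X_1 &= \partial_x - \tfrac{y}{2}\partial_z + (\text{higher weight}),\\
X_2 &= \partial_y + \tfrac{x}{2}\partial_z + (\text{higher weight}).
\end{align}
The content of the theorem is to show the higher-weight remainder can be put in the very rigid shape appearing in \eqref{eq:coordinate1}--\eqref{eq:coordinate2}, governed by two scalar functions $\beta,\gamma$.

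\textbf{Step 2: exploiting the coordinate freedom.} The remaining freedom consists of smooth changes of coordinates preserving $p$ and the structure of the leading Heisenberg part. I would analyze how a general such change acts on the remainder terms, expanding everything in homogeneous components with respect to the dilation weights. The idea is that a contact form $\omega$ with $\omega = -dz - \tfrac{y}{2}dx + \tfrac{x}{2}dy + \cdots$ can, by a Darboux-type / contact normalization, be brought to a form where the correction is controlled by a single function, and that the two orthonormality conditions for the metric $g$ on $D$ leave exactly two functional degrees of freedom, matching $\beta$ and $\gamma$. Imposing that $X_1,X_2$ remain orthonormal while straightening the $z$-fibers forces the horizontal corrections to be proportional to the Euler-type field $y\partial_x - x\partial_y$ (the generator of rotations in the $(x,y)$-plane), which is exactly the tensorial object appearing with coefficient $\beta y$, $-\beta x$ in the statement; similarly the vertical corrections collapse onto $\partial_z$ with coefficient $\mp\gamma\tfrac{y}{2}$, $\pm\gamma\tfrac{x}{2}$. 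The boundary conditions \eqref{eq:coordinate3} are then obtained by further normalizing along the $z$-axis: requiring the $z$-axis to be the Reeb trajectory through $p$ and that the frame agree with the Heisenberg frame to the appropriate order along this trajectory kills $\beta, \gamma$ and the first-order $x,y$-derivatives of $\gamma$ on $\{x=y=0\}$.

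\textbf{Main obstacle.} The hard part will be Step 2: showing that after using up all the coordinate freedom the remainder is genuinely captured by only two functions in precisely the prescribed algebraic form, rather than by a more general collection of correction terms. This is essentially a normal-form (moving-frame) computation, and the delicate point is to track the interaction between the contact normalization of $\omega$ and the metric normalization of $g$ simultaneously, verifying that the rotational structure $y\partial_x - x\partial_y$ and the vertical direction $\partial_z$ exhaust all admissible corrections. Rather than carry out this lengthy computation directly, I would appeal to the existence and uniqueness results of \cite{ACG96,ECG96}, where this normal form was first derived; the statement here is exactly their theorem, so the proof reduces to invoking that construction and recording the resulting expressions \eqref{eq:coordinate1}--\eqref{eq:coordinate3}.
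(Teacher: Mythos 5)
Your proposal is, in substance, the same as what the paper does: the paper states Theorem~\ref{t:normal} as an imported result of \cite{ACG96,ECG96} and offers no proof of its own, and your argument—after an outline of the privileged-coordinates/normalization strategy—likewise concludes by invoking exactly those references for the hard normal-form step. The preliminary sketch is a reasonable account of how such a normal form is obtained, but since both you and the paper ultimately rest on the cited construction, the approaches coincide.
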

Notice that, when $\beta=\gamma=0$,  one  recovers formulas \eqref{eq:hhh}.  In the same spirit as Riemannian normal coordinates, the coordinates given by Theorem~\ref{t:normal} normalize the zero-order term of the metric and have no first order correction term. For this statement to be formalized, let us introduce the notion of nilpotent approximation.

	\subsection{Nilpotent approximation}	In normal coordinates we introduce the family of dilations $\delta_\eps:\R^3\to \R^3$, for every $\eps>0$, by
	\beq \delta_\eps(x, y, z)=(\eps x, \eps y, \eps^2 z).\eeq
	For $i=1, 2$ we denote by $X_i^\eps$ the vector fields in $\R^{3}$
	\beq \label{eq:pb}
	X_i^\eps:=\eps (\delta_{\frac{1}{\eps}})_*X_i.
	\eeq
	
	For $\eps>0$, we consider the distribution $\distr^\eps=\textrm{span}\{X_1^\eps, X_2^\eps\}$; we put a metric $g^\eps$ on this distribution by declaring $\{X_1^\eps, X_2^\eps\}$ an orthonormal basis. Observe that $(\R^3, \distr^0, g^0)$ with this metric is the Heisenberg group $\H^3$. We denote by $B^\eps(1)$ the unit ball centered at the origin for the sub-Riemannian manifold $(\R^3, \distr^\eps, g^\eps)$ and by $B(\eps)\subset \R^3$  the image of $B(p, \eps)$ under the normal coordinates map. 
	
	\begin{lemma}\label{lemma:balls}For every $\eps>0$ small enough we have $B(\eps)=\delta_\eps(B^\eps(1)).$
	\end{lemma}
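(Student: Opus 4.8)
The plan is to show that the anisotropic dilation $\delta_\eps$ intertwines the rescaled structure $(\R^3,\distr^\eps,g^\eps)$ with the original one, multiplying the sub-Riemannian distance by the factor $\eps$; the stated equality of balls then drops out immediately. The starting observation is a purely algebraic one about the frames. Writing $X_i^\eps=\eps(\delta_{1/\eps})_*X_i$ and using the group property $\delta_\eps\circ\delta_{1/\eps}=\mathrm{id}$ together with functoriality of the pushforward, one obtains
\beq
(\delta_\eps)_* X_i^\eps=\eps\,X_i,\qquad i=1,2.
\eeq
This identity says that transporting the rescaled orthonormal frame by $\delta_\eps$ recovers the original frame up to the scalar $\eps$, and it is the entire content of the lemma at the infinitesimal level; indeed the definition of $X_i^\eps$ is engineered precisely so that this holds.

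I would then promote this to a correspondence of horizontal curves. If $\sigma\colon[0,1]\to\R^3$ is horizontal for $\distr^\eps$ with controls $(u_1,u_2)$, meaning $\dot\sigma=\sum_i u_i\,X_i^\eps(\sigma)$, then by the chain rule and the frame identity the curve $\delta_\eps\circ\sigma$ satisfies $\frac{d}{dt}(\delta_\eps\circ\sigma)=\sum_i \eps\,u_i\,X_i(\delta_\eps\circ\sigma)$, so it is horizontal for $\distr$ with controls $(\eps u_1,\eps u_2)$. Hence its $g$-length equals $\eps$ times the $g^\eps$-length of $\sigma$. Since $\delta_\eps$ is a diffeomorphism with inverse $\delta_{1/\eps}$, this is a length-rescaling bijection between horizontal curves of the two structures, and passing to infima over curves with prescribed endpoints gives
\beq
d(\delta_\eps(a),\delta_\eps(b))=\eps\,d^\eps(a,b),
\eeq
where $d$ and $d^\eps$ denote the distances of the original and of the $\eps$-structure. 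Taking $a=0$ and using $\delta_\eps(0)=0$ yields $d(0,\delta_\eps(b))=\eps\,d^\eps(0,b)$, so that $b$ satisfies $d^\eps(0,b)\le 1$ exactly when $\delta_\eps(b)$ satisfies $d(0,\delta_\eps(b))\le\eps$. The first condition describes $B^\eps(1)$ and the second describes $B(\eps)$, whence $\delta_\eps(B^\eps(1))=B(\eps)$.

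The only genuinely delicate point, and the reason for the restriction to small $\eps$, is the bookkeeping of domains. The frame $X_1,X_2$ furnished by Theorem~\ref{t:normal} is defined only on the image $U\subset\R^3$ of the normal coordinate chart, so identifying $B(p,\eps)$ with $B(\eps)$ requires both that $B(p,\eps)\subset U$ and that the length-minimizers of length at most $\eps$ remain inside $U$; these hold once $\eps$ is small, since short horizontal minimizers stay close to $p$ and hence do not feel the truncation of the structure to $U$. Dually, the rescaled frame $X_i^\eps$ lives on the expanded domain $\delta_{1/\eps}(U)$, and one must check that the competitor curves defining $B^\eps(1)$ remain inside it. This is where I expect the main (though still mild) work to lie: one uses that $\delta_{1/\eps}(U)$ exhausts $\R^3$ as $\eps\to 0$, while the balls $B^\eps(1)$ stay uniformly bounded because the frames $X_i^\eps$ converge smoothly to the Heisenberg frame $\widehat X_i$, whose unit ball is compact. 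Once these inclusions are secured the curve correspondence is a genuine bijection, and the distance identity---hence the lemma---follows with no further computation.
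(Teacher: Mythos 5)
Your proof is correct and follows essentially the same route as the paper's: the paper's entire argument is the observation that the dilation gives a length-rescaling bijection between horizontal curves of the two structures, which is exactly the curve correspondence you derive from the frame identity $(\delta_\eps)_*X_i^\eps=\eps X_i$ and the control bookkeeping. Your additional care about chart domains makes explicit why the hypothesis ``$\eps$ small enough'' is needed, a point the paper leaves implicit by declaring the correspondence ``immediate from the definition.''
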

	\begin{proof}
It is sufficient to prove that $\gamma:I\to \R^3$ is a horizontal curve for $(\R^3, D^1, g^1)$ with length $\ell(\gamma)$  if and only if $\gamma_\eps=\delta_{1/\eps}\circ \gamma$ is a horizontal curve for $(\R^3, D^\eps, g^\eps)$ with length $\eps^{-1}\ell(\gamma)$. This is immediate from the definition \eqref{eq:pb}.
%	 
%Since $\gamma:[0,1]\to M$ is horizontal, there exists $u_1, u_2 \in L^\infty(I)$ such that:
%\beq \dot{\gamma}(t)=u_1(t)X_1(\gamma(t))+u_2(t)X_2(\gamma(t))\quad  \textrm{ for a.e. $t\in[0,1]$.}\eeq 
%In particular:
%\begin{align}
%\dot{\gamma}_{\varepsilon}(t)&=u_1(t) \delta_{\frac{1}{\varepsilon}*}X_1 (\gamma_\varepsilon(t)) +u_2(t) \delta_{\frac{1}{\varepsilon}*}X_2 (\gamma_\varepsilon(t)) \\
%&= \frac{u_1(t)}{\varepsilon} X^\varepsilon_1 (\gamma_\varepsilon(t))+\frac{u_2(t)}{\varepsilon} X^\varepsilon_2 (\gamma_\varepsilon(t))\\
%&=u^{\varepsilon}_1(t)X^{\varepsilon}_1(\gamma_\varepsilon(t))+u^{\varepsilon}_2(t)X^{\varepsilon}_2(\gamma_\varepsilon(t)),
%\end{align}
%where we have set $u_i^{\varepsilon}=\varepsilon^{-1}u_i.$ In particular $\gamma_\varepsilon$ is also horizontal and, since by definition $\{X_1^\eps, X_2^\eps\}$ is an orthonormal frame for $(\R^3, D^\eps, g^\eps)$, its length with respect to $g^\eps$ equals:
%\beq \ell^\eps(\gamma_\eps)=\int_{I}\sqrt{u_1^\eps(t)^2+u_2^\eps(t)^2}dt=\eps^{-1}\int_I\sqrt{u_1(t)^2+u_2(t)^2}dt=\eps^{-1}\ell(\gamma).\eeq
\end{proof}	

	Next lemma expresses the vector fields $X_i^\eps$ as perturbations of the vector fields $\widehat{X}_i$ defining the Heisenberg structure and follows from a direct computation.
	
Given a smooth function $F(x,y,z)$ of three variables, we denote by $F^{[2]}(x,y,z)$ the second order homogeneous part of its Taylor polynomial at zero. Moreover we set $F^{[2]}(x,y):=F^{[2]}(x,y,0)$. 

	\begin{lemma}\label{lemma:asymp} The following asymptotic expansion holds for $\eps\to 0$
\begin{align*}
X^{\eps}_{1}&=(\partial_{x}-\frac{y}{2}\partial_{z})-\eps^{2}\frac{y}{2}\g^{[2]}(x,y)  \partial_{z} +O(\eps^{3}),\\[0.1cm]
X^{\eps}_{2}&=(\partial_{y}+\frac{x}{2}\partial_{z})+\eps^{2}\frac{x}{2}\g^{[2]}(x,y)   \partial_{z}+O(\eps^{3}),
\end{align*}
Moreover, denoting by $X_{0}^{\eps}:=\eps^{2} (\delta_{\frac{1}{\eps}})_*X_0$ and $(c_{ij}^{k})^{\eps}$ the structure constant satisfying
$$[X^{\eps}_{j},X^{\eps}_{i}]=\sum_{k=0}^{2}(c_{ij}^{k})^{\eps}X^{\eps}_{k}, \qquad i,j=0,1,2,$$
we have
\begin{align}
(c_{12}^{1})^{\eps}&=2\eps^{2}\partial_{y}\g+ O(\eps^{3}), &
(c_{12}^{2})^{\eps}&=-2\eps^{2}\partial_{x}\g+ O(\eps^{3})\\
(c_{01}^{1})^{\eps}&=-2\eps^{2}\partial^{2}_{xy}\g + O(\eps^{3}) &
(c_{01}^{2})^{\eps}&=2\eps^{2}\partial^{2}_{x}\g + O(\eps^{3})\\
(c_{02}^{1})^{\eps}&=-2\eps^{2}\partial^{2}_{y}\g + O(\eps^{3}) &
(c_{02}^{2})^{\eps}&=2\eps^{2}\partial^{2}_{xy}\g + O(\eps^{3}).
\end{align}
where $\gamma$ is as in Theorem~\ref{t:normal} and the partial derivatives of $\gamma$ are computed at zero.
\end{lemma}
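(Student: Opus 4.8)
The plan is to obtain the expansion by a direct, mechanical computation, pushing the definitions \eqref{eq:pb} through the dilation $\delta_\eps$ and extracting the coefficients order by order in $\eps$. First I would recall that since $\delta_\eps(x,y,z)=(\eps x,\eps y,\eps^2 z)$, the pushforward $(\delta_{1/\eps})_*$ acts on coordinate vector fields by $\partial_x\mapsto \eps^{-1}\partial_x$, $\partial_y\mapsto \eps^{-1}\partial_y$, $\partial_z\mapsto \eps^{-2}\partial_z$, and on functions by precomposition with $\delta_\eps$. Applying this to the explicit frame \eqref{eq:coordinate1}--\eqref{eq:coordinate2} and multiplying by the prefactor $\eps$ in \eqref{eq:pb}, every term of $\widehat X_i$ is fixed (this is the $\eps^0$ term), while the $\beta$- and $\g$-correction terms get rescaled. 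The key observation is that the boundary conditions \eqref{eq:coordinate3} force $\beta$ and $\g$ to vanish to sufficiently high order at the origin that their rescaled contributions carry powers of $\eps$; the Taylor expansion of $\beta(\delta_\eps(x,y,z))$ and $\g(\delta_\eps(x,y,z))$ around zero then produces the stated $\eps^2$ coefficients governed by $\g^{[2]}$.

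For the frame expansion itself, I would expand $\g(\eps x,\eps y,\eps^2 z)=\eps^2\g^{[2]}(x,y)+O(\eps^3)$, using precisely that $\g(0,0,z)=\partial_x\g(0,0,z)=\partial_y\g(0,0,z)=0$ kills the constant and linear-in-$(x,y)$ parts, so the lowest surviving homogeneous piece is the quadratic one evaluated on the horizontal plane $z=0$, i.e.\ $\g^{[2]}(x,y)$. A parallel expansion for $\beta$ shows, after multiplication by the explicit $x,y$ factors and the $\eps$ prefactor combined with the $\eps^{-1}$ from $(\delta_{1/\eps})_*$ acting on $\partial_x,\partial_y$, that the $\beta$-terms in $X_1,X_2$ contribute only at order $O(\eps^3)$ or higher — this is why $\beta$ does not appear in the leading correction and only $\g^{[2]}$ survives, multiplied by $\partial_z$ (which picks up the $\eps^{-2}$ that upgrades the $\eps^2$ from $\g$ down to the clean $\eps^2$ coefficient shown). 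Tracking the signs and the factors $y/2$, $x/2$ gives exactly the displayed formulas for $X_1^\eps$ and $X_2^\eps$.

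For the structure constants, the plan is to compute the brackets $[X_j^\eps, X_i^\eps]$ directly from the frame expansion just obtained, and then express the result in the frame $\{X_0^\eps, X_1^\eps, X_2^\eps\}$ to read off the $(c_{ij}^k)^\eps$. Since $X_0^\eps=\eps^2(\delta_{1/\eps})_*X_0$ and, to leading order, $X_0^\eps=-\partial_z+O(\eps)$ (as in the Heisenberg model), the $\partial_z$-component of each bracket is what produces the $X_0^\eps$-coefficient, while the horizontal components give the $c_{12}^1,c_{12}^2$ and the $c_{0i}^k$. Computing $[X_2^\eps,X_1^\eps]$ from the expansion, the Heisenberg part contributes the $+X_0$ term, and differentiating the $\eps^2\g^{[2]}$ corrections in the $\partial_z$-slot yields $(c_{12}^1)^\eps=2\eps^2\partial_y\g+O(\eps^3)$ and $(c_{12}^2)^\eps=-2\eps^2\partial_x\g+O(\eps^3)$; further brackets with $X_0^\eps$, which differentiate these once more, produce the second derivatives $\partial^2_{xy}\g,\partial^2_x\g,\partial^2_y\g$ in the remaining four constants, with the signs dictated by the antisymmetry of the bracket and the placement of $x,y$ factors.

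The main obstacle will be bookkeeping rather than conceptual: one must carefully track how the three distinct $\eps$-weights (the overall $\eps$ or $\eps^2$ prefactor, the $\eps^{-1}/\eps^{-2}$ from the pushforward on $\partial_x,\partial_y$ versus $\partial_z$, and the $\eps$-powers coming from Taylor-expanding $\beta,\g$ at the anisotropically dilated argument) combine, and verify that all $\beta$-contributions and all higher Taylor terms genuinely land in the $O(\eps^3)$ remainder. The delicate point is justifying that the remainders are uniform and can legitimately be written as $O(\eps^3)$ for the bracket computation, since brackets involve derivatives that could in principle lower the order — here one uses that differentiation in $x,y$ costs a factor $\eps$ through the chain rule on $\delta_\eps$, so the orders are preserved. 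Once this weight-counting is set up cleanly, every displayed identity follows by inspection.
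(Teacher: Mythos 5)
Your expansion of $X_1^\eps,X_2^\eps$ is correct and is exactly the paper's argument (the paper likewise notes that the $\beta$-terms only enter at order $\eps^{3}$, by the same weight count). The genuine gap is in the second half, where you propose to read off the structure constants from the brackets by splitting them into horizontal ($\partial_x,\partial_y$) and vertical ($\partial_z$) components, using only the leading-order information $X_0^\eps=-\partial_z+O(\eps)$. This cannot work, because the Reeb field of the perturbed structure is \emph{not} vertical at the relevant order: it acquires horizontal components precisely at order $\eps^{2}$. Concretely, from your own frame expansion and Euler's identity $x\partial_x\g^{[2]}+y\partial_y\g^{[2]}=2\g^{[2]}$ one gets
\begin{equation}
[X_2^\eps,X_1^\eps]=-\left(1+2\eps^{2}\g^{[2]}(x,y)\right)\partial_z+O(\eps^{3}),
\end{equation}
which has \emph{no} horizontal component modulo $O(\eps^{3})$. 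Reading off coefficients your way would therefore give $(c_{12}^{1})^\eps=(c_{12}^{2})^\eps=O(\eps^{3})$, contradicting the statement: the displayed values $2\eps^{2}\partial_y\g$ and $-2\eps^{2}\partial_x\g$ are to be understood as functions (the linearizations of $\partial_y\g,\partial_x\g$ at zero, i.e.\ $\partial_y\g^{[2]}(x,y),\partial_x\g^{[2]}(x,y)$), and these are not $O(\eps^{3})$. The resolution is that
\begin{equation}
X_0^\eps=-\partial_z-2\eps^{2}\left(\partial_y\g^{[2]}\,\partial_x-\partial_x\g^{[2]}\,\partial_y\right)+O(\eps^{3}),
\end{equation}
so the horizontal parts of $(c_{12}^{1})^\eps X_1^\eps+(c_{12}^{2})^\eps X_2^\eps$ cancel against those of $X_0^\eps$, leaving a purely vertical bracket. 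The same issue invalidates your computation of the $(c_{0i}^{k})^\eps$: since the coefficients of $X_i^\eps$ are $z$-independent at this order, $[X_i^\eps,-\partial_z]=O(\eps^{3})$, so with the naive Reeb field all six constants would come out zero; the stated second derivatives of $\g$ arise exactly from bracketing with the $\eps^{2}$ horizontal correction of $X_0^\eps$.

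To repair the argument you must determine $X_0^\eps$ to order $\eps^{2}$ before (or simultaneously with) extracting the constants: either compute the normalized contact form $\omega$ and solve for its Reeb field, or characterize $X_0^\eps$ by the conditions $[X_0^\eps,X_i^\eps]\in\distr^\eps$, $i=1,2$, and solve the resulting linear system recursively in powers of $\eps$ together with the $(c_{ij}^{k})^\eps$. This is the ``solve recursively'' route that the paper explicitly mentions and deliberately avoids. The paper instead observes that $X_0^\eps$ is the Reeb field of $\omega^\eps=\eps^{-2}\delta_\eps^*\omega$, replaces the frame by a truncation whose contact form, Reeb field and structure constants were computed explicitly in \cite{BAR13}, and then concludes via the homogeneity law $(c_{ij}^{k})^\eps=\eps^{d_i+d_j-d_k}(c_{ij}^{k}\circ\delta_\eps)$. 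Your weight-counting for the frame and for the remainders is sound, but without the $\eps^{2}$-correction of the Reeb field the structure-constant computation is underdetermined and, as set up, yields wrong values.
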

%\begin{remark}
%
%Notice that the normal coordinate system 
%given in Theorem \ref{t:normal} 
%is privileged at 0. 
%Moreover the weights of the coordinates $(x,y,z)$ at the origin are
%$$\nu(x)=\nu(y)=1, \qquad \nu(z)=2,$$
%and every homogeneous term of the vector fields $X_{1},X_{2}$ has degree $\geq -1$.
%\end{remark}

\begin{proof}
The expansion of $X_i^\varepsilon$ follows directly from the definitions of the vector fields and their explicit form in normal coordinates \eqref{eq:coordinate1} and \eqref{eq:coordinate3}. 

To prove the asymptotics of the structure constants, we note first that the vector fields $X_1^\varepsilon,X_2^\varepsilon$ for each $\varepsilon\geq 0$ define a contact structure with $X_0^\varepsilon$ as the Reeb field. Indeed, it is easy to verify using the definitions that 
$$
\omega^\varepsilon = \frac{1}{\varepsilon^2}\delta_\varepsilon^*\omega
$$
is the one-form defining the distribution. This means that $X_i^\varepsilon$ satisfy the structure equations \eqref{eq:algebracampi} with $(c^k_{ij})^\varepsilon$ as structure constants, for $i,j,k=0,1,2$. Knowing explicitly the Reeb field $X_0^\varepsilon$, one could expand $X_i^\varepsilon$ and $(c^k_{ij})^\varepsilon$ into power series of $\varepsilon$ and solve recursively for coefficients of $(c^k_{ij})^\varepsilon$ remembering that for the Heisenberg structure $c_{12}^{0}=-c_{21}^{0}=1$ and $c^k_{ij}= 0$ otherwise.

To avoid explicit computations, we proceed in a slightly different manner. Instead of considering our original contact structure defined by $X_i$, we look at different structure defined by $\tilde X_i$, such that the asymptotic expansions of vector fields $X_i^\varepsilon$ and $\tilde{X}_i^\varepsilon$ agree up to a certain order of $\varepsilon$. Then by repeating the previous argument we obtain an asymptotic expansion for $(\tilde{c}^k_{ij})^\varepsilon$ that agrees with asymptotic expansion for $(c^k_{ij})^\varepsilon$ up to the same order.

To prove our claim we need the asymptotic expansion up to order two. This can be achieved by considering vector fields 
\begin{align*}
\tilde X_{1}&=\partial_{x}-\frac{y(1+\eps^{2} \g^{[2]}(x,y))}{2}\partial_{z},\\[0.1cm]
\tilde X_{2}&=\partial_{y}+\frac{x(1+\eps^{2}\g^{[2]}(x,y))}{2}\partial_{z},
\end{align*}
which is just a truncation of the original vector fields. In~\cite{BAR13} explicit expressions for the corresponding one-form $\tilde{\omega}$, the Reeb vector field $\tilde{X}_0$ and the structure constants $(\tilde{c}^k_{ij})$ we found. In particular
\begin{align}
(\tilde c_{12}^{1})^{\eps}&=\frac{2\partial_{y}\g}{1+2\g}, &
(\tilde c_{12}^{2})^{\eps}&=-\frac{2\partial_{x}\g}{1+2\g}\\
(\tilde c_{01}^{1})^{\eps}&=-\frac{2((1+2\g)\partial_{xy}\g-2\partial_y\g\partial_x\g)}{(1+2\g)^2} &
(\tilde c_{01}^{2})^{\eps}&=\frac{2((1+2\g)\partial_{xx}\g-2(\partial_y\g)^2)}{(1+2\g)^2}\\
(\tilde c_{02}^{1})^{\eps}&=-\frac{2((1+2\g)\partial_{yy}\g+2(\partial_x\g)^2)}{(1+2\g)^2} &
(\tilde c_{02}^{2})^{\eps}&=\frac{2((1+2\g)\partial_{xy}\g+2\partial_y\g\partial_x\g)}{(1+2\g)^2}.
\end{align}
Lemma \ref{lemma:asymp} now is a direct consequence of the following homogeneity property. If $c^k_{ij}$ are the structure constants associated with the vector fields $X_i$, then
\begin{equation}
(c^k_{ij})^\varepsilon = \varepsilon^{d_{i}+d_{j}-d_{k}}(c^k_{ij}\circ \delta_\varepsilon)
\end{equation}
where we set $d_{1}=d_{2}=1$ and $d_{0}=2$.
\end{proof}

\subsection{Exponential map}\label{s:expmap}
In order to describe a geodesic ball, we need a good description of geodesics. As in  Riemannian geometry, one can define an analogue of the exponential map. But unlike the Riemannian case, it is defined as a map from the cotangent bundle to the manifold using Hamiltonian dynamics.

We start by defining the basis Hamiltonian $h_i: T^*M \to M$ as linear on fibers functions
$$
h_i(\lambda) = \langle \lambda, X_i\rangle, \qquad \lambda\in T^*M, \qquad i=0,1,2.
$$
We are going to use $h_i(\lambda)$ as coordinate functions on fibers of $T^*M$ and therefore from now on we do not indicate explicitly the dependence on $\lambda$.

It is well known that geodesics on a rank 2 sub-Riemannian manifold are projections of solutions of a Hamiltonian system with a quadratic Hamiltonian~\cite{nostrolibro}
\bee\label{eq:hamiltonian} H=\frac12 (h_{1}^{2}+h_{2}^{2})\eee
So we can define the exponential map $\exp:T^*M\to M$ associated to the Hamiltonian $H$ as:
\bee \exp(\lambda)=\pi(e^{\overrightarrow{H}}(\lambda)),\eee
where $\vec{H}$ is the corresponding Hamiltonian vector field. So if we wish to restrict only to geodesics that go out from a point $p\in M$, we have to consider $\lambda \in T^*_p M$, and we define the map $\exp_p :T^*_p M \to M$ as a restriction
$$
\exp_p = \exp|_{T^*_p M}.
$$

Since we are interested only in the behaviour of small balls around $p$, one can use the usual Darboux coordinates on the cotangent bundle and the standard Poisson bracket to write down explicitly the Hamiltonian system. But it is better to take a slightly more invariant approach and consider the Lie-Poisson bracket on $T^*M$. The Lie-Poisson bracket of two basis Hamiltonians $h_i,h_j$ is defined as
$$
\{h_j,h_i\}(\lambda) = \langle \lambda, [X_j,X_i] \rangle = \sum_{k=0}^{2} c^k_{ij}h_k(\lambda). 
$$ 
A bracket of any two smooth functions on the fibers of $T^*M$ can be defined via linearity and Leibnitz rule. Then our Hamiltonian system can be written as
\begin{equation} \label{eq:hamgeneral}
\begin{cases}
\dot{p}=h_{1} X_{1}(p)+h_{2} X_{2}(p)\\
\dot{h}_{i}=\{H,h_{i}\}
\end{cases}
\end{equation}

\begin{remark}
It is interesting that the invariant $\chi$ can be obtained directly from the Hamiltonian system. If we denote by
$$\{H,h_{0}\}=c_{01}^{1} h_{1}^2+(c_{01}^{2}+c_{02}^{1})h_{1}h_{2}+c_{02}^{2} h_{2}^{2}$$	
the corresponding quadratic form in $h_{1},h_{2}$, then $\mathrm{trace}\,\{H,h_{0}\}=0$ since $e^{t X_{0}}_{*}\distr=\distr$. The other invariant
$$\sqrt{-\det \{H,h_{0}\}}\geq 0$$
is exactly $\chi$ and it is zero when $(e^{t X_{0}})^{*} g=g $.
\end{remark}

It is well known that solutions of a Hamiltonian system lie on a level set of the corresponding Hamiltonian. In our case the projections of the level sets to fibers of $T^*M$ are cylinders. So we can introduce \emph{cylindrical  coordinates} $(\rho,\theta,w)$ on $T^*_p M$ as
\begin{align}
h_1 &= \rho \cos \theta \nonumber\\
h_2 &= \rho \sin \theta \label{eq:cylin_def}\\
h_0 &= -w \nonumber
\end{align}
 It follows immediately that $\rho$ is constant along solutions of \eqref{eq:hamgeneral} and it is equal to the speed of the corresponding geodesics.

We are interested in the study of small balls $B(\varepsilon)$. Lemma~\ref{lemma:balls} gives an explicit relation between $B(\varepsilon)$ and the unit ball $B^\varepsilon(1)$ of the dilated system. We can describe the ball $B^\varepsilon(1)$ and its volume using the exponential map of the dilated system, that we denote by $\exp_p^\varepsilon$. We also have a different set of cylindrical coordinates, but they are related as can be seen from the following lemma.
\begin{lemma}\label{lemma:diagram}For every $\eps>0$ let $\tau_{\eps}:T^{*}_pM\to T^*_pM$ be the map defined in cylindrical coordinates by $\tau_{\eps}(\rho, \theta, w)=\left(\rho \eps,\theta, w\right).$ Then for every $\eps>0$ the following diagram is commutative:
\beq
		\begin{tikzpicture}[xscale=5, yscale=2]

    \node (A0_0) at (0, 0) {$\R^{3}$};
    \node (A0_1) at (0, 1) {$T^*_pM$};
    \node (A1_1) at (1/2, 1) {$T^*_pM$};
    \node (A1_0) at (1/2, 0) {$\R^{3}$};
    \path (A0_1) edge [->] node [auto, swap] {$\exp_p$} (A0_0) ;
        \path (A1_1) edge [->] node [auto] {$\exp_p^\eps$} (A1_0) ;

    \path (A0_1) edge [->] node [auto] {$\tau_{\frac{1}{\eps}}$} (A1_1);
        \path (A0_0) edge [->] node [auto] {$\delta_{\frac{1}{\eps}}$} (A1_0);

      \end{tikzpicture}
      \eeq
   Here we identify $\R^{3}$ with an open neighborhood of $p$ on which normal coordinates are defined.
\end{lemma}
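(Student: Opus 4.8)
The plan is to follow the Hamiltonian flow of $H$ under the dilation $\delta_\eps$ and to observe that two distinct rescalings—one coming from the anisotropic action of $\delta_\eps$ on covectors, one from a reparametrization in time—compensate each other exactly.

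First I would lift the dilation to the cotangent bundle. Let $\hat\delta_\eps\colon T^*M\to T^*M$ denote the cotangent lift of $\delta_\eps$; it is a symplectomorphism satisfying $\pi\circ\hat\delta_\eps=\delta_\eps\circ\pi$. For the cotangent lift $\hat\phi$ of any diffeomorphism $\phi$ one has the elementary identity $h_{\phi_*X}\circ\hat\phi=h_X$. Applying this with $\phi=\delta_{1/\eps}$ and $\phi_*X_i=(\delta_{1/\eps})_*X_i=\tfrac1\eps X_i^\eps$ (see \eqref{eq:pb}) gives $h_i^\eps=\eps\,(h_i\circ\hat\delta_\eps)$ and therefore
\begin{equation}
H^\eps=\eps^2\,(H\circ\hat\delta_\eps).
\end{equation}
Next I would transport this to the level of flows. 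Since $\hat\delta_\eps$ is symplectic, $\vec{H^\eps}=\eps^2(\hat\delta_\eps)^*\vec H$, so the time-one flow factors as $e^{\vec{H^\eps}}=\hat\delta_{1/\eps}\circ e^{\eps^2\vec H}\circ\hat\delta_\eps$. Projecting by $\pi$ and using $\pi\circ\hat\delta_{1/\eps}=\delta_{1/\eps}\circ\pi$ yields, on the fiber over the fixed point $p=0$ of all the $\delta_\eps$ (which $\hat\delta_\eps$ therefore preserves),
\begin{equation}
\exp_p^\eps=\delta_{1/\eps}\circ\exp_p^{(\eps^2)}\circ\hat\delta_\eps,
\end{equation}
where $\exp_p^{(t)}(\lambda):=\pi(e^{t\vec H}(\lambda))$ denotes the time-$t$ exponential, so that $\exp_p=\exp_p^{(1)}$.

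The heart of the argument is then a cancellation. Writing $\hat\delta_\eps$ in the coordinates $(h_1,h_2,h_0)$ on $T^*_pM$, a direct computation gives $\hat\delta_\eps|_{T^*_pM}=\diag(\eps^{-1},\eps^{-1},\eps^{-2})$, whereas $\tau_{1/\eps}$ rescales only $\rho$ (equivalently $h_1,h_2$) by $\eps^{-1}$ and fixes $w$ (equivalently $h_0$). Hence the composition $\hat\delta_\eps\circ\tau_{1/\eps}$ is the purely fiberwise scaling $\lambda\mapsto\eps^{-2}\lambda$. Finally, since $H$ is fiberwise homogeneous of degree two, the exponential satisfies $\exp_p^{(t)}(c\lambda)=\exp_p^{(ct)}(\lambda)$; taking $t=\eps^2$ and $c=\eps^{-2}$ gives $\exp_p^{(\eps^2)}(\eps^{-2}\lambda)=\exp_p(\lambda)$. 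Combining the last three displays,
\begin{equation}
\exp_p^\eps(\tau_{1/\eps}\lambda)=\delta_{1/\eps}\bigl(\exp_p^{(\eps^2)}(\eps^{-2}\lambda)\bigr)=\delta_{1/\eps}\bigl(\exp_p(\lambda)\bigr),
\end{equation}
which is exactly the asserted commutativity $\delta_{1/\eps}\circ\exp_p=\exp_p^\eps\circ\tau_{1/\eps}$.

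The main obstacle is purely one of bookkeeping. The cotangent lift $\hat\delta_\eps$ and the map $\tau_\eps$ do \emph{not} coincide: they differ precisely in the $h_0$-direction, where $\hat\delta_\eps$ contributes an extra factor $\eps^{-2}$, and one must check that this mismatch is compensated exactly by the time factor $\eps^2$ produced by the degree-two homogeneity of $H^\eps$. Keeping the three distinct scalings—the base dilation, the covector action, and the time reparametrization—separate and consistently normalized is where care is needed. One could instead verify the correspondence directly at the level of solutions of the Hamiltonian system \eqref{eq:hamgeneral}, combined with the horizontal-curve correspondence of Lemma~\ref{lemma:balls}, but the cotangent-lift route makes the cancellation most transparent.
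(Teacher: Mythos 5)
Your proof is correct and follows essentially the same route as the paper's: both lift the dilation $\delta_\eps$ to the symplectomorphism $(\delta_{1/\eps})^*$ of $T^*M$ (your $\hat\delta_\eps$, the paper's $\alpha_\eps$), derive $H^\eps=\eps^2\,H\circ(\delta_{1/\eps})^*$, conjugate the Hamiltonian flows, use the degree-two homogeneity of $H$ to trade the time factor $\eps^2$ for a fiberwise scaling, and conclude by the same fiberwise computation (your identity $\hat\delta_\eps\circ\tau_{1/\eps}=\eps^{-2}\,\mathrm{id}$ is exactly the paper's $\eps^2\circ\alpha_\eps|_{T^*_pM}=\tau_\eps$). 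The only differences are cosmetic: you verify the diagram as $\delta_{1/\eps}\circ\exp_p=\exp_p^\eps\circ\tau_{1/\eps}$ while the paper proves the equivalent identity $\delta_\eps\circ\exp_p^\eps=\exp_p\circ\tau_\eps$.
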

\begin{proof}Since both $\delta_\eps$ and $\tau_\eps$ are diffeomorphisms, we prove the equivalent statement:
\bee \delta_\eps\circ \exp_p^\eps=\exp_p\circ \,\tau_\eps.\eee
We start by recalling the definition of $\exp_p^\eps= \pi\circ e^{\overrightarrow{H^{\eps}}}$, where $H^\eps:T^*M\to \R$ is the Hamiltonian:
\bee \label{eq:hameps}H^\eps=\frac{1}{2}\left((h_1^\eps)^2+(h_2^\eps)^2\right).\eee
By definition the hamiltonians $h_i^\eps$ are given by:
\begin{align}h_i^\eps(\lambda)&=\left\langle \lambda , \eps (\delta_{1/\eps})_*X_i\right\rangle=\eps \left\langle (\delta_{1/\eps})^*\lambda , X_i\right\rangle=\eps h_i(\alpha_\eps(\lambda)),
\end{align}
where we have defined the diffeomorphisms $\alpha_\eps\doteq (\delta_{1/\eps})^*:T^*M\to T^*M$. Notice that $\alpha_\eps$ lifts $\delta_\eps:M\to M$, hence it is a symplectomorphisms. As a consequence $H^\eps=\eps^2 H\circ \alpha_\eps$. In particular we can write (we use simple identities that can be easily verified by the reader, referring for example to \cite{nostrolibro} for a detailed proof):
\begin{align}\delta_\eps\circ \exp^\eps&=\delta_\eps\circ\pi\circ e^{\eps^2\overrightarrow{H\circ \alpha_\eps}}\\
&=\delta_\eps\circ\pi\circ e^{\eps^2(\alpha_\eps^{-1})_*\overrightarrow{H}}\quad (\textrm{by \cite[Proposition 4.52]{nostrolibro}})\\
&=\delta_\eps\circ\pi\circ \alpha_\eps^{-1}\circ e^{\eps^2\overrightarrow{H}}\circ \alpha_\eps\quad (\textrm{by \cite[Lemma 2.20]{nostrolibro}})\\
&=\pi \circ e^{\eps^2\overrightarrow{H}}\circ \alpha_\eps\quad (\textrm{because $\alpha_\eps^{-1}$ lifts $\delta_{\frac{1}{\eps}}$})\\
&=\pi \circ \eps^{-2}\circ e^{\overrightarrow{H}}\circ \eps^2\circ \alpha_\eps\quad (\textrm{by \cite[Lemma 8.33]{nostrolibro}})\\
&=\pi\circ e^{\overrightarrow{H}}\circ \eps^2\circ \alpha_\eps\quad (\textrm{because $\eps^{-2}$ preserves the fibers of $\pi$}).
\end{align}
It remains to verify that $\eps^2\circ\alpha_\eps|_{T^*pM}=\tau_\eps.$ Recalling the definition of $\alpha_\eps=(\delta_{1/\eps})^*$ we see that $\alpha_\eps|_{T^*_qM}$ is given in cylindrical coordinates by:
\bee\alpha_\eps(\rho, \theta, w)=\left(\frac{\rho}{\eps}, \theta , \frac{w}{\eps^2}\right),\eee
and consequently $\eps^2\circ\alpha_\eps(\rho, \theta, w)=(\eps \rho, \theta, w)=\tau_\eps(\rho, \theta, w).$ This concludes the proof.

\end{proof}

The next proposition gives the necessary asymptotics of the Jacobian of the exponential maps $\exp^\varepsilon_p$.

\begin{proposition}\label{prop:expansion} The Jacobians $\det(J \exp^\varepsilon)$ of the family of exponential maps $\exp^\eps_p$ have the following expansion in cylindrical coordinates $(\rho,\theta,w)$ as $\eps\to 0$:
\beq \label{eq:expasymp}\det(J \exp_p^\varepsilon) = \det(J \exp_p^0) +v_2(\rho, \theta, w)\varepsilon^2 + O(\varepsilon^3),\eeq
where  $\exp_p^0:\R^3\to \R^3$ is the exponential map for the Heisenberg group and
\beq \label{eq:v2}v_2(\rho, \theta, w)= \rho^5\left(\frac{\kappa(p)}{2}g_0(w) + g_c(w)\cos 2\theta+ g_s(w)\sin 2\theta\right)\eeq
with $g_0(w),g_c(w),g_s(w)$  smooth functions of $w$ variable only. Moreover the functions $\det(J \exp_p^0) $ and $g_0$ have the following expression:
\begin{align}
\label{eq:expJ0} \det(J \exp_p^0)(\rho, \theta, w)&=\rho^3 \frac{ (2-2 \cos w-w \sin w)}{w^4}\\
\label{eq:g0}g_0(w)&= \frac{(16-3w^2)\cos w +2\cos 2w + 13w\sin w + w\sin 2w-18}{w^6}.
\end{align}
\end{proposition}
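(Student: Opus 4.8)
The plan is to solve the Hamiltonian system \eqref{eq:hamgeneral} for the dilated structure $(\R^3,\distr^\eps,g^\eps)$ perturbatively in $\eps$, treating it as an $\eps^2$-perturbation of the Heisenberg flow, and then to expand the resulting Jacobian. In the fibre coordinates $(h_0,h_1,h_2)$ and base coordinates $(x,y,z)$ the equations of motion are $\dot h_i=\{H^\eps,h_i\}=\sum_{j=1,2}\sum_{k=0}^{2}(c^k_{ij})^\eps h_j h_k$ together with $\dot p=h_1 X_1^\eps(p)+h_2 X_2^\eps(p)$. By Lemma~\ref{lemma:asymp} both the drift fields $X_i^\eps$ and the structure constants $(c^k_{ij})^\eps$ agree with their Heisenberg values up to order $\eps^2$, and there is no $\eps^1$ correction; this is exactly why the expansion \eqref{eq:expasymp} starts at $\eps^2$. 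I would therefore set $h_i=h_i^{(0)}+\eps^2 h_i^{(2)}+O(\eps^3)$ and likewise for $(x,y,z)$.

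At zeroth order the system is the Heisenberg one, $\dot h_0=0$, $\dot h_1=h_2h_0$, $\dot h_2=-h_1h_0$, whose solution in the coordinates \eqref{eq:cylin_def} is $h_1+ih_2=\rho e^{i(\theta+wt)}$ with $h_0=-w$ constant, and $(x,y,z)$ obtained by one and two further integrations; evaluating at $t=1$, differentiating in $(\rho,\theta,w)$ and taking the determinant yields \eqref{eq:expJ0} by a direct computation. The second-order terms $h_i^{(2)},(x,y,z)^{(2)}$ solve the linear variational equations obtained by differentiating the flow in $\eps$: these are linear inhomogeneous ODEs whose forcing is built from the explicit zeroth-order solution together with the $\eps^2$-part of the structure constants (entering $\dot h_i$) and the $\eps^2$-part of the drift through $\gamma^{[2]}$ (entering $\dot z$), all recorded in Lemma~\ref{lemma:asymp}. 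Since the forcing is known in closed form the corrections integrate explicitly. Expanding the determinant multilinearly in its rows $R_1,R_2,R_3$ (the $x,y,z$ components) gives $\det(J\exp^\eps_p)=\det(J\exp^0_p)+\eps^2\,\delta+O(\eps^3)$, where $\delta$ is the sum of the three determinants obtained by replacing one zeroth-order row $R_i^{(0)}$ by its second-order counterpart $R_i^{(2)}$; this $\delta$ is $v_2$.

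The structural form of $v_2$ in \eqref{eq:v2} I would explain through two symmetries. The $\rho^5$ homogeneity is forced by Lemma~\ref{lemma:diagram}: taking determinants of Jacobians in $\delta_\eps\circ\exp_p^\eps=\exp_p\circ\tau_\eps$ and using $\det J\delta_\eps=\eps^4$, $\det J\tau_\eps=\eps$ gives $\det(J\exp_p^\eps)(\rho,\theta,w)=\eps^{-3}\det(J\exp_p)(\eps\rho,\theta,w)$, so the coefficient of $\eps^2$ is precisely the coefficient of $\rho^5$ in the $\rho$-Taylor expansion of the genuine Jacobian, and the vanishing of the $\eps^1$ (hence $\rho^4$) term is again built in. The $\theta$-dependence comes from the residual $\mathrm{SO}(2)$ symmetry of the Heisenberg model: the $\eps^2$ forcing enters linearly through the Hessian $(\partial^2_{xx}\gamma,\partial^2_{xy}\gamma,\partial^2_{yy}\gamma)$ of $\gamma$ at $p$, which splits into its trace (a spin-$0$ scalar, giving the $\theta$-independent term) and its trace-free part (a spin-$2$ tensor, giving $\cos2\theta$ and $\sin2\theta$); no higher harmonics can appear at this order. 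Finally, evaluating $\kappa(p)$ from \eqref{eq:defkappa} in normal coordinates shows that $\kappa(p)$ is a constant multiple of $(\partial^2_{xx}\gamma+\partial^2_{yy}\gamma)(p)$, which lets me identify the spin-$0$ part with $\tfrac{\kappa(p)}{2}g_0(w)$, while the trace-free part supplies $g_c(w)\cos2\theta+g_s(w)\sin2\theta$ with $g_c,g_s$ independent of the point.

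The main obstacle is the explicit integration of the variational system and the subsequent determinant expansion that produce the closed-form $w$-functions, in particular verifying \eqref{eq:g0} and the rational–trigonometric expression \eqref{eq:expJ0}: these are long but mechanical, and the delicate points are keeping the behaviour as $w\to0$ consistent and fixing the numerical constant that matches the trace term to $\tfrac{\kappa(p)}{2}$. The spin-$2$ functions $g_c,g_s$ require the same computation but are ultimately irrelevant for the volume, since they integrate to zero in $\theta$; this is the analytic reason why only $\kappa$, and not $\chi$, enters Theorem~\ref{t:main}.
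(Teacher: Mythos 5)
Your proposal is correct, and its computational core coincides with the paper's proof: expand the Hamiltonian system \eqref{eq:hamgeneral} for the dilated structure in powers of $\varepsilon$, note from Lemma~\ref{lemma:asymp} that all $\varepsilon$-corrections to the data start at order $\varepsilon^2$, so the zeroth-order flow is the Heisenberg one \eqref{eq:heisexp} (giving \eqref{eq:expJ0}), the first-order term vanishes, and the second-order term solves linear variational equations with explicit forcing. Where you genuinely depart from the paper is in how the structural form \eqref{eq:v2} is obtained. The paper gets both the $\rho^5$ prefactor and the $\theta$-dependence only a posteriori, by carrying out the second-order integration and simplifying the Jacobian ``after various trigonometric identities''; you derive them a priori from two symmetries: the scaling identity $\det(J\exp_p^\varepsilon)(\rho,\theta,w)=\varepsilon^{-3}\det(J\exp_p)(\varepsilon\rho,\theta,w)$, which does follow from Lemma~\ref{lemma:diagram} exactly as you say and identifies the $\varepsilon^k$-coefficient with the $\rho^{k+3}$ Taylor coefficient of the unscaled Jacobian; and the linearity of the $\varepsilon^2$-data in the Hessian of $\gamma$ at $p$ combined with rotational equivariance of the Heisenberg flow, which forces the Fourier content of $v_2$ in $\theta$ to consist of harmonics $0$ and $2$ only. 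This buys, with no computation, exactly the property the paper singles out as crucial in the remark following the proposition (namely $v_2=\kappa(p)f_1(\rho,w)+f_2$ with $\int_0^{2\pi}f_2\,d\theta=0$, using $\kappa(p)=2(a+c)$ from the lemma following Lemma~\ref{lemma:gamma}), and it reduces the unavoidable explicit work to the single scalar function $g_0$; the paper's brute-force route instead produces $g_0,g_c,g_s$ all at once. Two caveats. First, like the published proof, you do not actually perform the second-order integration, so \eqref{eq:g0} and the normalization matching the spin-$0$ part to $\tfrac{\kappa(p)}{2}g_0(w)$ remain asserted rather than verified; this is the same level of detail as the paper, hence not a gap relative to it. Second, your statement that $g_c,g_s$ are ``independent of the point'' is wrong: by your own linearity argument they are linear in the trace-free part of the Hessian of $\gamma$, hence depend on $p$ (through $\chi(p)$ and a phase); only the universal functions multiplying $(a-c)$ and $b$ are point-independent. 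The slip is harmless, since these terms are annihilated by the $\theta$-integration in the proof of Theorem~\ref{t:main}.
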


\begin{remark} 
We observe that the crucial information contained in Proposition~\ref{prop:expansion} that we use later  is 
\beq 
v_2(\rho, \theta, w)=\kappa(p)\, f_1(\rho, w)+f_2(\rho, \theta, w)\quad \textrm{with} \quad \int_{0}^{2\pi}f_2(\rho, \theta, w)d\theta=0.
\eeq 

\end{remark}  

\begin{remark}\label{rem:hughen29}As pointed out by the anonymous referee, the expansion of the Jacobian of the exponential map can also be derived from the proof of \cite[Proposition 3.6]{hughen}, which uses similar methods.
\end{remark}
\begin{proof}

We start by writing  the Hamiltonian system for the dilated structure. The Hamiltonian $H^\varepsilon$ is given by \eqref{eq:hameps} and we can write  the Hamiltonian system \eqref{eq:hamgeneral} explicitly using the Lie-Poisson bracket. We get
\begin{equation} \label{eq:ham3-dimensionalas}
\begin{cases}
\dot{p}=h^\varepsilon_{1} X_{1}(p)+h^\varepsilon_{2} X_{2}(p)\\
\dot{h}^\varepsilon_{1}=\{H^\varepsilon,h^\varepsilon_{1}\}=\{h^\varepsilon_{2},h^\varepsilon_{1}\} h^\varepsilon_{2} \\
\dot{h}^\varepsilon_{2}=\{H^\varepsilon,h^\varepsilon_{2}\} =\{h^\varepsilon_{1},h^\varepsilon_{2}\} h^\varepsilon_{1} \\
\dot{h}^\varepsilon_{0}=\{H^\varepsilon,h^\varepsilon_{0}\}= \{h^\varepsilon_{1},h^\varepsilon_{0}\} h^\varepsilon_{1}+\{h^\varepsilon_{2},h^\varepsilon_{0}\} h^\varepsilon_{2} \\
\end{cases}
\end{equation}
To rewrite our system in cylindrical coordinates, we make a change of variables on the fibers of $T^*M$
\begin{align*}
h_1^\varepsilon &= \rho \cos \theta \\
h_2^\varepsilon &= \rho \sin \theta \\
h_0^\varepsilon &= -w 
\end{align*}
We also introduce the following functions
$$
a^\varepsilon(\theta) = \frac{1}{\rho^2} \{H^\varepsilon,h_0^\varepsilon\}= (c_{01}^{1})^\varepsilon\cos(\theta)^{2}+((c_{01}^{2})^\varepsilon+(c_{02}^{1})^\varepsilon)\cos(\theta)\sin(\theta)+(c_{02}^{2})^\varepsilon \sin(\theta)^{2},
$$
$$
b^\varepsilon(\theta) =-\frac{1}{\rho} \left(\{h_1^\varepsilon,h_2^\varepsilon\}+h_0^\varepsilon\right)=(c_{12}^{1})^\varepsilon \cos \theta  + (c_{12}^{2})^\varepsilon\sin \theta.
$$
Then, after various simplifications, we obtain the Hamiltonian system
\begin{equation} \label{eq:ham3-dimensionalas2}
\begin{cases}
\dot{p}=\rho\cos(\theta)  X_1^\varepsilon(p) + \rho\sin(\theta)  X_2^\varepsilon(p)\\
\dot{\rho}=0 \\
\dot{\theta}= w-\rho b^\varepsilon(\theta)\\
\dot{w}= -\rho^{2} a^\varepsilon(\theta).
\end{cases}
\end{equation}

Now we expand the right-hand side and the phase variables in series of powers of $\varepsilon$. This will give us a number of ordinary differential equations on the coefficients, that we are going to solve. Since the Hamiltonian system \eqref{eq:ham3-dimensionalas} is smooth, depends smoothly on $\varepsilon$, and we are interested only in the behaviour for small $\varepsilon$, the resulting asymptotics is going to be uniform.

We fix normal coordinates $(x,y,z)$ around $p\in M$. In this coordinates $p=(0,0,0)$. Then we fix an initial covector $(\bar\rho,\bar \theta,\bar w)$
and look at how the corresponding geodesic changes as $\varepsilon$ goes to zero. Thus our asymptotic expansions are
\begin{equation}
\begin{cases}
x(t) = x_0(t) + x_1(t)\varepsilon + x_2(t)\frac{\varepsilon^2}{2} + O(\varepsilon^3) \\[0.2cm]
y(t) = y_0(t) + y_1(t)\varepsilon + y_2(t)\frac{\varepsilon^2}{2} + O(\varepsilon^3) \\[0.2cm]
z(t) = z_0(t) + z_1(t)\varepsilon + z_2(t)\frac{\varepsilon^2}{2} + O(\varepsilon^3) 
\end{cases}\quad
\begin{cases}
w(t) = w_0(t) + w_1(t)\varepsilon + w_2(t)\frac{\varepsilon^2}{2} + O(\varepsilon^3) \\[0.2cm]
\theta(t) = \theta_0(t) + \theta_1(t)\varepsilon + \theta_2(t)\frac{\varepsilon^2}{2} + O(\varepsilon^3)
\end{cases}
\end{equation}
Since the initial covector $(\bar\rho,\bar \theta,\bar w)$ is independent of $\varepsilon$, we  have the following boundary conditions
\begin{align}
x_i(0) &= y_i(0) = z_i(0) = 0, & &\forall i \in \mathbb{N}_0 \\
\theta_i(0) &= w_i(0) = 0, & &\forall i \in \mathbb{N}\\
\theta_0(0) &= \bar \theta, \ w_0(0) = \bar w.
\end{align}
Let us look at the principal and first order terms of the asymptotics. First of all we note that from Lemma~\ref{lemma:asymp} it follows that all the structure constants are $O(\varepsilon^2)$. Thus functions $a^\varepsilon$ and $b^\varepsilon$ are $O(\varepsilon^2)$ as well. Using the asymptotics of $X^\varepsilon_i$ from the same lemma, we then obtain a system for the zero-order term
\begin{equation*}
\begin{cases}
\dot{x}_0=\bar \rho\cos\theta_0\\
\dot{y}_0=\bar \rho\sin\theta_0\\
\dot{z}_0= \frac{ \bar \rho}{2}(x_0  \sin\theta_0 -  y_0 \cos\theta_0 )
\end{cases}\qquad
\begin{cases}
\dot{\theta}_0= w_0\\
\dot{w}_0= 0
\end{cases}
\end{equation*}
But this is nothing but the geodesic equations on the Heisenberg group whose solutions are explicit
\begin{equation}
\label{eq:heisexp}
\begin{cases}
x_0(t)=\frac{\bar\rho(\sin(\bar wt+\bar\theta)-\sin \bar\theta)}{\bar w}\\
y_0(t)=-\frac{\bar\rho(\cos(\bar wt+\bar\theta)-\cos \bar\theta)}{\bar w}\\
z_0(t)=\frac{\bar\rho^2(\bar wt-\sin t \bar w)}{2\bar w^2}
\end{cases}\qquad
\begin{cases}\theta_0(t)= \bar wt+\bar \theta\\
w_0(t)= \bar w
\end{cases}
\end{equation}
Thus we see that as $\varepsilon \to 0$ geodesics of the dilated system converge to the geodesics of the Heisenberg group as expected. Moreover, setting $t=1$ in \eqref{eq:heisexp} and differentitating with respect to $(\bar \rho, \bar \theta, \bar w)$ we immediately obtain \eqref{eq:expJ0}.

Next we write the system of order one. We obtain
\begin{equation*}
\begin{cases}
\dot{x}_1=-\bar \rho\theta_1\sin\theta_0\\
\dot{y}_1=\bar \rho\theta_1\cos\theta_0\\
\dot{z}_1=\frac{\bar \rho}{2}(-y_1\cos\theta_0 + x_1 \sin\theta_0 + x_0\theta_1\cos\theta_0 + y_0\theta_1\sin\theta_0 )
\end{cases}\qquad
\begin{cases}
\dot{\theta}_1= w_1\\
\dot{w}_1= 0
\end{cases}
\end{equation*}
Using the zero boundary conditions we get
$$
w_1(t) = \theta_1(t) = x_1(t) = y_1(t) = z_1(t) =0, \qquad \forall t.
$$
From here it immediately follows that the zero-order term in the expression is the Heisenberg term and the first order term is identically zero.

We continue this procedure. At each next step we integrate expression involving only terms from the previous steps. Then we can plug all asymptotic expansions into the Jacobian and after various simplifications, we obtain the result. 
The second order term in the asymptotics of the exponential map is a result of similar but rather long computations. The simplification of the expression for the Jacobian becomes a tedious exercise after applying various trigonometric identities. 
\end{proof}

 \subsection{The Popp volume and curvature invariants in normal coordinates}
On a contact sub-Riemannian manifold it is possible to define a canonical volume that depends only on the sub-Riemannian structure, called Popp volume. Here we recall its construction only in the 3-dimensional case, the interested reader is referred to \cite{montgomery} and \cite{BRpopp} for the general construction and its explicit expression in terms of an adapted frame.

Given an orthonormal frame $X_{1},X_{2}$ for the sub-Riemannian structure and the corresponding Reeb vector field $X_{0}$, let us denote by $\nu_{1},\nu_{2},\nu_{0}$ the dual basis of 1-forms. The Popp volume $\mu$ is defined as the three-form $\mu=\pm \nu_{1}\wedge\nu_{2}\wedge \nu_{0}$. The sign is chosen in such a way that the volume is positive.
 
Recall that we denote by $F^{[2]}(x,y,z)$ the second order homogeneous part of a smooth function $F(x,y,z)$ of three variables and $F^{[2]}(x,y):=F^{[2]}(x,y,0)$.

 \begin{lemma}\label{lemma:gamma}Using normal coordinates (introduced in Section \ref{sec:coord}) the Popp volume form can be written as $\mu=\psi \, dx\wedge dy\wedge dz$, where $\psi:\R^3\to \R$ is a smooth function such that:
 \beq\label{eq:ga2} \psi (x, y,z)=1-2\gamma^{[2]}(x,y)+O\left(\|(x, y, z)\|^3\right).\eeq
 where
 \begin{equation} \label{eq:pdgamma}
\gamma^{[2]}(x,y)=x^{2} \partial^{2}_{x}\gamma +2xy \partial_{xy}^{2}\gamma+y^{2}\partial^{2}_{y}\gamma
\end{equation}
where the partial derivatives of $\gamma$ in \eqref{eq:pdgamma} are computed at zero. 
 \end{lemma}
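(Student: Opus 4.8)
The plan is to compute the Popp volume directly from its definition $\mu=\pm\nu_1\wedge\nu_2\wedge\nu_0$, where $\nu_0,\nu_1,\nu_2$ is the coframe dual to $X_0,X_1,X_2$, and to extract its second-order Taylor expansion. The key observation is that the coefficient function $\psi$ in $\mu=\psi\,dx\wedge dy\wedge dz$ is precisely the reciprocal of the determinant of the matrix $P$ whose columns are the coordinate expressions of $X_1,X_2,X_0$ in the basis $\partial_x,\partial_y,\partial_z$; equivalently $\psi=\det(P)^{-1}$ once signs are arranged so that $\mu$ is positive. So the first step is to write down $X_1,X_2$ explicitly from the normal form \eqref{eq:coordinate1}--\eqref{eq:coordinate2}, and then compute the Reeb field $X_0$ for this structure.

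Concretely, I would first extract $X_1,X_2$ as coordinate vector fields: from \eqref{eq:coordinate1}--\eqref{eq:coordinate2} their components involve $\beta$ and $\gamma$, and using the boundary conditions \eqref{eq:coordinate3} I would expand $\beta$ and $\gamma$ to the relevant order at the origin. Since $\gamma(0,0,z)=\partial_x\gamma(0,0,z)=\partial_y\gamma(0,0,z)=0$, the function $\gamma$ has no constant or linear part in $(x,y)$ near $p$, so its leading contribution is the homogeneous quadratic $\gamma^{[2]}(x,y)$ given by \eqref{eq:pdgamma}; similarly $\beta$ starts at order zero but $\beta(0,0,z)=0$ forces it to vanish at the origin, so $\beta$ contributes only at higher order to the quantities of interest. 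The next step is to compute $X_0$: I would use the normalization that $\omega=-\nu_0$ is the contact form with $d\omega|_D$ equal to the area form of $g$, determine $\omega$ from the condition $\omega(X_1)=\omega(X_2)=0$, $\omega(X_0)=1$, and then solve \eqref{eq:reeb} for the Reeb field. Alternatively, and more cleanly, I would assemble the three vector fields into the matrix $P$ and compute $\det P$ directly, since $\nu_1\wedge\nu_2\wedge\nu_0=(\det P)^{-1}dx\wedge dy\wedge dz$.

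The heart of the computation is then the expansion $\det(P)=1+2\gamma^{[2]}(x,y)+O(\|(x,y,z)\|^3)$, which gives $\psi=\det(P)^{-1}=1-2\gamma^{[2]}(x,y)+O(\|(x,y,z)\|^3)$, matching \eqref{eq:ga2}. The main obstacle is bookkeeping: one must verify that the $\beta$-terms, the $z$-direction terms in $X_1,X_2$, and the Reeb field's own $\partial_x,\partial_y$ components do not contribute at second order to $\det P$. The boundary conditions \eqref{eq:coordinate3} are exactly what kill the dangerous cross-terms, so I would organize the calculation by homogeneity degree in $(x,y,z)$ with the dilation weights $x,y\sim 1$, $z\sim 2$, and track only terms up to total weight two. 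The leading deviation of $\det P$ from $1$ comes from the $\gamma\,\partial_z$ corrections to the $z$-components of $X_1,X_2$, which produce exactly the factor $2\gamma^{[2]}$, while all other corrections are either linear (hence absent by the boundary conditions) or of higher order. Inverting the determinant and choosing the sign so that $\psi>0$ near $p$ then yields the claimed formula.
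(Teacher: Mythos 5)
Your framework is the right one, and it coincides with the paper's starting point: $\psi$ is (up to sign) $1/\det P$, the $\beta$-terms are negligible at this order, and the computation should be organized by the weights $x,y\sim 1$, $z\sim 2$. The genuine gap is that the two bookkeeping claims at the heart of your outline are false, and executing the plan as written would give $\psi=1+O(\|(x,y,z)\|^{3})$ instead of \eqref{eq:ga2}. To see this, set $\beta=0$, write $X_{1}=\partial_{x}-\tfrac{y}{2}(1+\gamma)\partial_{z}$, $X_{2}=\partial_{y}+\tfrac{x}{2}(1+\gamma)\partial_{z}$, $X_{0}=u\,\partial_{x}+v\,\partial_{y}+w\,\partial_{z}$, and expand along the last column:
\begin{equation}
\det P=u\,\frac{y(1+\gamma)}{2}-v\,\frac{x(1+\gamma)}{2}+w.
\end{equation}
By the structure equations \eqref{eq:algebracampi} we have $X_{0}=[X_{2},X_{1}]-c_{12}^{1}X_{1}-c_{12}^{2}X_{2}$, and $[X_{2},X_{1}]$ is vertical when $\beta=0$; hence $u=-c_{12}^{1}$ and $v=-c_{12}^{2}$, which (cf.\ the formulas for the structure functions quoted in the proof of Lemma~\ref{lemma:asymp}) equal $-2\partial_{y}\gamma+O(2)$ and $2\partial_{x}\gamma+O(2)$. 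These horizontal Reeb components are nonzero \emph{linear} functions: the boundary conditions \eqref{eq:coordinate3} kill $\gamma$ and its first derivatives along the $z$-axis, not its second derivatives at the origin. Paired with the weight-one Heisenberg entries $\mp y/2$, $\pm x/2$, they contribute $-(x\partial_{x}\gamma+y\partial_{y}\gamma)=-2\gamma^{[2]}+O(3)$ to $\det P$, i.e.\ the \emph{entire} second-order term. Conversely, one finds $w=-1+O(3)$ (the weight-two part of the vertical component of $[X_{2},X_{1}]$ is exactly cancelled by the $c_{12}^{i}$-corrections), and the $\gamma\,\partial_{z}$-corrections to $X_{1},X_{2}$ enter $\det P$ only multiplied by $u,v$, hence at weight four. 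So the terms you propose to discard produce the whole answer, while the terms you credit with producing it contribute nothing at this order: had the Reeb field been exactly $-\partial_{z}$, one would have $\det P\equiv -1$ for every $\gamma$, i.e.\ $\psi\equiv 1$.

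The clean way out — and it is what the paper does — is to never compute the Reeb field at all: since $X_{0}$ differs from $[X_{2},X_{1}]$ by an element of $\spn\{X_{1},X_{2}\}$, column operations give $\det(X_{1},X_{2},X_{0})=\det(X_{1},X_{2},[X_{2},X_{1}])$, an identity which also encodes, once and for all, the cancellations described above. Then a one-line bracket computation (with $\beta=0$) gives $[X_{1},X_{2}]=\left(1+\gamma+\tfrac{1}{2}(x\partial_{x}\gamma+y\partial_{y}\gamma)\right)\partial_{z}$, so $|\det P|=1+2\gamma^{[2]}(x,y)+O(\|(x,y,z)\|^{3})$ by \eqref{eq:coordinate3} and Euler's identity for the quadratic part, and inverting yields \eqref{eq:ga2}. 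Your route of computing the normalized contact form and then solving \eqref{eq:reeb} for $X_{0}$ can be made to work, but only if you carry the horizontal part of $X_{0}$ to first order — precisely the piece your outline throws away.
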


\begin{proof} For notational convenience, let us introduce $X_{3}:=[X_{2},X_{1}]$ and denote by $(x_{1},x_{2},x_{3})$ the coordinates $(x,y,z)$. Let $\nu_{1},\nu_{2}, \nu_{3}$ be the dual basis of 1-forms to $X_{1},X_{2},X_{3}$. Notice that the Popp volume $\mu$ is written as $\mu=\nu_{1}\wedge\nu_{2}\wedge \nu_{3}$ (up to the choice of positive sign), as a consequence of the relation
$[X_2,X_1]=X_0 \mod \distr$ (cf.~\eqref{eq:algebracampi}).
Considering the coordinate expression of the vector fields and the basis of 1-forms 
$$X_{i}=\sum_{j=1}^{3}a_{ij}\partial_{j},\qquad i=1,2,3,\qquad
\textrm{and}
\qquad \nu_{k}= \sum_{l=1}^{3}b_{kl} dx_{l},\qquad k=1,2,3,$$
for some smooth functions $a_{ij}, b_{kl}$. Then the matrices $A=(a_{ij})$ and $B=(b_{kl})$ satisfy the relation $B=(A^{T})^{-1}$. 
In particular 
\begin{equation}\label{eq:muuu}
\mu =|\det(B)|dx\wedge dy\wedge dz=|\det(A)|^{-1}dx\wedge dy\wedge dz.
\end{equation}
From the explicit expression of the vector fields \eqref{eq:coordinate1}-\eqref{eq:coordinate2} and boundary conditions \eqref{eq:coordinate3}, it is easy to check that every coefficient of the vector fields containing $\beta$ gives a contribution of order at least three  in the expansion of the determinant. Hence, to compute the expansion of \eqref{eq:muuu} up to second order,  it is not restrictive to assume that $\beta=0$. Under this assumption, one computes 
\begin{equation}
[X_{1},X_{2}]=\left(1+ \gamma+\frac{1}{2}(x\partial_{x}\gamma+y\partial_{y}\gamma)\right)\partial_{z},
\end{equation}
which implies 
\begin{align*}
|\det(A)|&=1+\gamma(x,y,z)+\frac{1}{2}(x\partial_{x}\gamma(x,y,z)+y\partial_{y}\gamma(x,y,z))+ O\left(\|(x, y, z)\|^3\right)\\
&=1+2\gamma^{[2]}(x,y)+ O\left(\|(x, y, z)\|^3\right),
\end{align*}
where in the last equality  we used the boundary conditions \eqref{eq:coordinate3}. Taking the inverse and combining with \eqref{eq:muuu}, the proof is completed.
\end{proof}

Along the same lines of the proof of Lemma \ref{lemma:gamma} one obtains the following result. A proof is contained in \cite[Lemma 4]{BAR13}.
\begin{lemma} In normal coordinates (introduced in Section \ref{sec:coord}) writing 
$$\gamma^{[2]}(x,y)=a x^{2}  +2bxy+cy^{2},$$ we have the following expression for the curvature-like invariants at the origin
\beq 
\kappa(p)=2(a+c), \qquad \chi(p)=2\sqrt{b^{2}+(c-a)^{2}}.
\eeq

\end{lemma}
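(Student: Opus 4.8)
The plan is to reduce the statement to an explicit computation of the structure constants $c_{ij}^k$ of the normal frame at the base point $p$, followed by a direct substitution into the definitions \eqref{eq:defchi2} and \eqref{eq:defkappa}. Both $\chi$ and $\kappa$ are expressed purely in terms of the $c_{ij}^k$ and, in the case of $\kappa$, of the derivatives $X_2(c_{12}^1)$ and $X_1(c_{12}^2)$; so it suffices to determine, at the origin, the values of $c_{01}^1,c_{01}^2,c_{02}^1,c_{02}^2$ together with the first jets of $c_{12}^1$ and $c_{12}^2$. All of these quantities are in fact already encoded in Lemma \ref{lemma:asymp}: the homogeneity relation $(c_{ij}^k)^\eps=\eps^{d_i+d_j-d_k}(c_{ij}^k\circ\delta_\eps)$ shows that the coefficient of the leading power of $\eps$ in each expansion there is exactly the corresponding Taylor datum of $c_{ij}^k$ at the origin. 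This is the route I would take; equivalently, one can argue directly as in the proof of Lemma \ref{lemma:gamma}, which is the approach of \cite[Lemma 4]{BAR13}.

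Carrying out the direct computation, I would start from the explicit orthonormal frame \eqref{eq:coordinate1}--\eqref{eq:coordinate2}. As in Lemma \ref{lemma:gamma}, the boundary conditions \eqref{eq:coordinate3} force every term containing $\beta$ to be of order at least three near the origin, so $\beta$ does not affect the second-order invariants $\chi(p),\kappa(p)$ and may be set to zero. With $\beta=0$ one computes $[X_1,X_2]=g\,\partial_z$ with $g=1+\gamma+\tfrac12(x\partial_x\gamma+y\partial_y\gamma)$, exactly as in Lemma \ref{lemma:gamma}. Normalizing the contact form so that $d\omega(X_1,X_2)=1$ then determines $\omega$ (its $dz$-coefficient is $-1/g$), and the Reeb field $X_0$ is recovered from the defining relations \eqref{eq:reeb}, i.e.\ from $\iota_{X_0}d\omega=0$ and $\omega(X_0)=1$. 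Feeding $X_0$ and the frame into the structure equations \eqref{eq:algebracampi} produces the $c_{ij}^k$ as smooth functions in a neighborhood of the origin.

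Taylor expanding $\gamma$ at the origin, the constants $c_{01}^k,c_{02}^k$ evaluate at $p$ to universal multiples of the second derivatives $\partial_x^2\gamma,\partial_{xy}^2\gamma,\partial_y^2\gamma$, that is, to combinations of $a,b,c$; while $c_{12}^1$ and $c_{12}^2$ vanish at $p$ and have leading parts proportional to $\partial_y\gamma$ and $\partial_x\gamma$ respectively, so that $X_2(c_{12}^1)-X_1(c_{12}^2)$, evaluated at $p$, is again a combination of $a,b,c$. Substituting these values into \eqref{eq:defchi2} and \eqref{eq:defkappa}---noting that the quadratic terms $(c_{12}^1)^2,(c_{12}^2)^2$ drop out at $p$ since $c_{12}^k(p)=0$---and simplifying yields $\kappa(p)=2(a+c)$ and $\chi(p)=2\sqrt{b^2+(c-a)^2}$.

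The main obstacle is the explicit determination of the Reeb field $X_0$ in the perturbed coordinates and, relatedly, the need to retain first-order rather than merely pointwise information about $c_{12}^1$ and $c_{12}^2$: because $\kappa$ involves the derivatives $X_2(c_{12}^1)$ and $X_1(c_{12}^2)$, the values of the structure constants at $p$ alone are insufficient, and one must control their linear terms at the origin. Once these are in hand, the remaining step---collecting the second-derivative data of $\gamma$ and simplifying the resulting expressions for $\chi$ and $\kappa$---is routine, and is the curvature analogue of the volume computation performed in Lemma \ref{lemma:gamma}.
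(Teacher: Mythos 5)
Your proposal is correct and follows essentially the same route as the paper: the paper's proof of this lemma is precisely the computation you outline (discard $\beta$ by the order-three argument of Lemma \ref{lemma:gamma}, compute the contact form, Reeb field and structure constants of the frame \eqref{eq:coordinate1}--\eqref{eq:coordinate2} with $\beta=0$, then substitute into \eqref{eq:defchi2} and \eqref{eq:defkappa}), with the explicit calculation deferred to \cite[Lemma 4]{BAR13}. You also correctly identify the only delicate point, namely that $\kappa$ requires the first-order jets of $c_{12}^{1}$ and $c_{12}^{2}$ at $p$ rather than merely their values.
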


	\subsection{Cut-time asymptotic} 
	Let $\gamma:[0,T]\to M$ be a horizontal curve. We say that $\g$ is a \emph{length-minimizer} if $d(\gamma(0),\gamma(T))=\ell(\gamma)$. Notice that this notion is independent on the parametrization of the curve.

Fix now a horizontal curve  $\gamma:[0,T]\to M$ parametrized by arc-length. Then we define
\begin{equation}
t_{cut}(\gamma)=\sup\{t>0: \gamma|_{[0,t]} \text{ is a length-minimizer}\}.
\end{equation}
If $\gamma$ is a geodesic parametrized by arclength on a 3-dimensional contact manifold, then it is well-known that $t_{cut}(\gamma)>0$. This is related with the fact that there are no abnormal minimizers, see for instance \cite[Chapter 8]{nostrolibro} and \cite[Appendix]{BRinterpolation}.

Parametrizing geodesics as in Section~\ref{s:expmap} with covectors in cylindrical coordinates $(\rho,\theta,w)$ we have a well-defined cut time associated with every initial covector with $\rho=1$.

We give here the asymptotic expansion for the cut time of geodesics. This result is obtained combining \cite[Theorem 4.2]{agrexp} and \cite[Theorem 5.2]{agrexp}, covering the case $\chi(p)\neq 0$ and $\chi(p)=0$, respectively.\footnote{A note on the reference: to recover the cut time, denoted $\ell_{*}(\theta;\nu)$ in \cite[Theorem 5.2]{agrexp}, one needs the formula of $\ell_{1}(\theta;\nu)$ of \cite[Theorem 5.1]{agrexp}, whose expression contains a typo. Indeed the second summand of its expression is $-\pi\kappa(q_{0})\nu^{-3}$ (and not $-\pi\kappa(q_{0})\nu^{-2}$) as it can be directly checked from the proof.}
\begin{proposition} \label{t:cut3-dimensional}
We have the following asymptotic expansion for the cut time from $p\in M$. 
\begin{equation}\label{eq:tcut}
t_{cut}(1,\theta,w)= \frac{2\pi}{|w|}-\frac{\pi(\kappa(p)+2\chi(p) \sin^{2}\theta)}{|w|^{3}}+O\left(\frac{1}{|w|^{4}}\right), \qquad w \to \pm\infty.
\end{equation}
\end{proposition}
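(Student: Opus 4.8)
The plan is to prove the cut-time asymptotics \eqref{eq:tcut} by combining the nilpotent approximation structure already developed in the excerpt with the known first-conjugate-time/cut-time analysis of \cite{agrexp}. Conceptually, the cut time of a geodesic on a 3D contact manifold is, to leading order, governed by the Heisenberg model, where every arclength geodesic with $h_0=-w$ is a helix of curvature $|w|$ that loses optimality exactly when it closes up, giving $t_{cut}=2\pi/|w|$; the genuine work is to extract the first correction in the large-$|w|$ regime, which is precisely where the invariants $\kappa(p)$ and $\chi(p)$ enter.

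\textbf{Strategy.} First I would set $\rho=1$ and recall that, for contact structures with no abnormal minimizers, the cut time coincides with the first conjugate time up to the well-understood symmetry reductions, so it suffices to control the first conjugate time along the flow \eqref{eq:ham3-dimensionalas2}. The natural small parameter is $\nu=1/|w|\to 0$, which plays the role of $\eps$: a geodesic with large $|w|$ winds rapidly and stays in a small neighborhood of $p$, so its optimality is dictated by the nilpotent approximation plus the leading curvature perturbation. Concretely, I would rescale and read off from the expansions in Lemma~\ref{lemma:asymp} how the structure functions $a^\eps(\theta)$ and $b^\eps(\theta)$ feed into the angular equation $\dot\theta=w-\rho b^\eps(\theta)$ and the drift $\dot w=-\rho^2 a^\eps(\theta)$. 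The zeroth-order term reproduces the Heisenberg answer $2\pi/|w|$, and the first correction is obtained by linearizing the conjugate-point condition around the closed Heisenberg helix and integrating the resulting Jacobi-type equation over one period.

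\textbf{Where the invariants appear.} The term $-\pi\kappa(p)/|w|^3$ comes from the average over one period $\theta\in[0,2\pi]$ of the curvature contribution, where the angle-independent part of $a^\eps$ survives and reproduces $\kappa(p)=2(a+c)$ via Lemma~\ref{lemma:gamma}'s coefficients; the anisotropic term $-2\pi\chi(p)\sin^2\theta/|w|^3$ comes from the $\cos2\theta,\sin2\theta$ components of $a^\eps$, which do not average to zero once weighted against the initial angle $\bar\theta$ and which assemble into $\chi(p)=2\sqrt{b^2+(c-a)^2}$. I would import these coefficients directly from \cite[Theorem 4.2]{agrexp} and \cite[Theorem 5.2]{agrexp}, treating the cases $\chi(p)\neq0$ and $\chi(p)=0$ separately exactly as those theorems do, and then reconcile the two regimes into the single uniform expansion \eqref{eq:tcut}. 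The footnote in the excerpt already flags the one typo in \cite[Theorem 5.1]{agrexp} (the exponent $\nu^{-3}$ versus $\nu^{-2}$) that must be corrected for the coefficients to match; incorporating that correction is essential for the $\kappa$-coefficient to come out right.

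\textbf{Main obstacle.} The hard part will be the bookkeeping in the first-order conjugate-time computation: one must linearize the conjugate-point determinant condition along the Heisenberg helix, identify which Fourier modes in $\theta$ of $a^\eps$ and $b^\eps$ contribute at order $|w|^{-3}$, and verify that the higher modes and the cross terms either cancel or are absorbed into the $O(|w|^{-4})$ remainder. Keeping the $\chi$-dependence with the correct $\sin^2\theta$ angular profile, rather than an averaged constant, requires carefully tracking the initial angle $\bar\theta$ through the perturbation rather than averaging prematurely. Since this is precisely the content already established in \cite{agrexp}, my proof reduces to invoking those two theorems, transporting their invariants into the notation of \eqref{eq:defchi2}--\eqref{eq:defkappa} through the frame computation of Lemma~\ref{lemma:gamma}, and checking that the two cases glue to the stated uniform asymptotics.
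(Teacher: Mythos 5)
Your proposal is correct and takes essentially the same approach as the paper: the paper's own ``proof'' of Proposition~\ref{t:cut3-dimensional} consists precisely in invoking \cite[Theorem 4.2]{agrexp} for the case $\chi(p)\neq 0$ and \cite[Theorem 5.2]{agrexp} for the case $\chi(p)=0$, together with the corrected typo in \cite[Theorem 5.1]{agrexp}, which is exactly what your argument ultimately reduces to. The perturbative/conjugate-point sketch you describe is a plausible summary of how those cited theorems are themselves proved, not an independent route, so there is nothing substantive to compare beyond the citation.
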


Thanks to this result we get an asymptotic description of the set of initial parameters mapped on the ball of radius $\eps$ through the exponential map.	
\begin{corollary}\label{cor:D}
For $\eps>0$ small enough there exists a set $\set(\eps)\subset T^*_pM$, whose description in cylindrical coordinates is given by
	\beq \label{eq:crucial}
	\set(\eps)=\left\{|\rho|\leq \eps,\, \theta\in[0, 2\pi],\, w\in[-2\pi+\rho^2f(\theta)+O(\rho^3), 2\pi-\rho^2f(\theta)+O(\rho^3)]\right\},\eeq
	where $f(\theta)$ is a smooth function of $\theta$.  Moreover the following properties holds:
	\begin{itemize}
	\item[(i)] \label{eq:D1} $\overline B(p,\eps)=\exp(\set(\eps))$;
	\item[(ii)] \label{eq:D2} $\exp|_{\mathrm{int}(\set(\eps)\backslash\{\rho=0\})}$ is injective with injective differential;
	\item[(iii)] \label{eq:D3} $\exp(\mathrm{int}(\set(\eps)\backslash\{\rho=0\}))$ has full measure in $B(p,\eps).$
	\end{itemize}
	\end{corollary}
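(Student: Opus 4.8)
The plan is to reduce the statement to the cut-time asymptotics of Proposition~\ref{t:cut3-dimensional} by exploiting the homogeneity of the sub-Riemannian Hamiltonian flow. The key remark is that, writing a covector in cylindrical coordinates as $(\rho,\theta,w)=\rho\cdot(1,\theta,w/\rho)$ and using that rescaling the initial covector by a factor merely reparametrizes the geodesic (cf.\ \cite{nostrolibro}), the point $\exp_p(\rho,\theta,w)$ coincides with the point reached at time $\rho$ along the \emph{unit-speed} geodesic with initial covector $(1,\theta,w/\rho)$. Since $\rho$ equals the speed, this arc has length $\rho$, and it is length-minimizing — equivalently $\exp_p(\rho,\theta,w)$ lies on $\partial B(p,\rho)$ and not past the cut point — precisely when $\rho\le t_{cut}(1,\theta,w/\rho)$. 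First I would record this equivalence carefully, paying attention to the degenerate axis $\{\rho=0\}$, which $\exp_p$ collapses to the single point $p$.

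With this dictionary in hand the set $\set(\eps)$ is forced: it is the union over $0\le\rho\le\eps$ of the covectors $(\rho,\theta,w)$ with $|w|\le W(\rho,\theta)$, where the threshold $W(\rho,\theta)$ solves $\rho=t_{cut}(1,\theta,W/\rho)$. To obtain its shape I would substitute $\tilde w=w/\rho$ into the expansion \eqref{eq:tcut}; since $|w|$ will turn out to be close to $2\pi$ while $\rho\to0$, the argument $\tilde w=w/\rho$ tends to $\pm\infty$, so \eqref{eq:tcut} indeed applies. Solving $1=\frac{2\pi}{|w|}-\frac{\pi(\kappa(p)+2\chi(p)\sin^2\theta)}{|w|^3}\rho^2+O(\rho^3)$ by the implicit function theorem (equivalently, by setting $|w|=2\pi+\delta$ and matching orders) gives
\beq
|w|=2\pi-\rho^2\,f(\theta)+O(\rho^3),\qquad f(\theta)=\frac{\kappa(p)+2\chi(p)\sin^2\theta}{4\pi},
\eeq
which is smooth in $\theta$ and symmetric under $w\mapsto -w$, hence exactly the claimed description \eqref{eq:crucial}. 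The uniformity of the remainder in $\theta$ follows from the smooth dependence of the cut-time expansion on $\theta$.

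It then remains to verify (i)--(iii). For (i), the inclusion $\exp(\set(\eps))\subseteq\overline B(p,\eps)$ is immediate since every covector in $\set(\eps)$ produces a minimizing arc of length $\rho\le\eps$; conversely every point of $\overline B(p,\eps)$ is joined to $p$ by a minimizing geodesic of length $\le\eps$, whose covector therefore lies in $\set(\eps)$, using that on a $3$-dimensional contact manifold there are no abnormal minimizers and the cut time is strictly positive. For (ii) I would invoke the standard fact that strictly before the cut time a geodesic is the unique minimizer between its endpoints and carries no conjugate points (as $t_{cut}\le t_{conj}$); hence on $\mathrm{int}(\set(\eps)\backslash\{\rho=0\})$, where $|w|<W(\rho,\theta)$ and $\rho>0$, the map $\exp$ is injective with nondegenerate differential. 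For (iii), the complement of $\exp(\mathrm{int}(\set(\eps)\backslash\{\rho=0\}))$ inside $B(p,\eps)$ is contained in the union of $\{p\}$ with the cut locus from $p$, a closed set of measure zero, whence full measure.

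The main obstacle is the passage between the cut-time expansion, phrased for arc-length geodesics as $w\to\pm\infty$, and the description of $\set(\eps)$ in the $(\rho,\theta,w)$ variables with $\rho\to0$: one must check that the rescaling $\tilde w=w/\rho$ lands in the asymptotic regime uniformly in $\theta$, and that the implicit-function solution of $\rho=t_{cut}(1,\theta,w/\rho)$ yields a genuinely smooth boundary function $f$ with a controlled $O(\rho^3)$ remainder. The regularity inputs needed for (ii)--(iii) — unique minimizers and absence of conjugate points strictly before the cut time, together with the measure-zero property of the cut locus — are standard for contact structures in the absence of abnormal minimizers, and I would cite them rather than reprove them.
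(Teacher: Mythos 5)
Your proposal is correct and follows essentially the same route as the paper's proof: both define $\set(\eps)$ as the star-shaped set of covectors $(\rho,\theta,w)$ with $\rho\le t_{cut}(1,\theta,w/\rho)$, deduce properties (i)--(iii) from compactness of small closed balls, absence of abnormal minimizers, and the fact that minimizers contain no cut or conjugate points, and then extract $f(\theta)=\frac{\kappa+2\chi\sin^{2}\theta}{4\pi}$ from the expansion \eqref{eq:tcut}. The only cosmetic difference is that the paper inverts the function $w\mapsto t_{cut}(1,\theta,w)$ near infinity and multiplies by $\rho$, whereas you solve the equivalent implicit equation $1=\frac{2\pi}{|w|}-\frac{\pi(\kappa+2\chi\sin^{2}\theta)}{|w|^{3}}\rho^{2}+O(\rho^{3})$ by the implicit function theorem; these are the same computation.
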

We observe that the existence of a set $\Omega(\eps)$ satisfying conditions (i)-(iii) above is true as soon as the sub-Riemannian structure does not contain non-trivial abnormal minimizers. This condition is, in particular, satisfied for contact sub-Riemannian manifolds. A sample image of $\Omega(\varepsilon)$ is presented in Figure~\ref{fig:egg}. The crucial fact in Corollary~\ref{cor:D} is the asymptotic description given in \eqref{eq:crucial}.

\begin{figure}[t]
\centering
\includegraphics[scale=0.5]{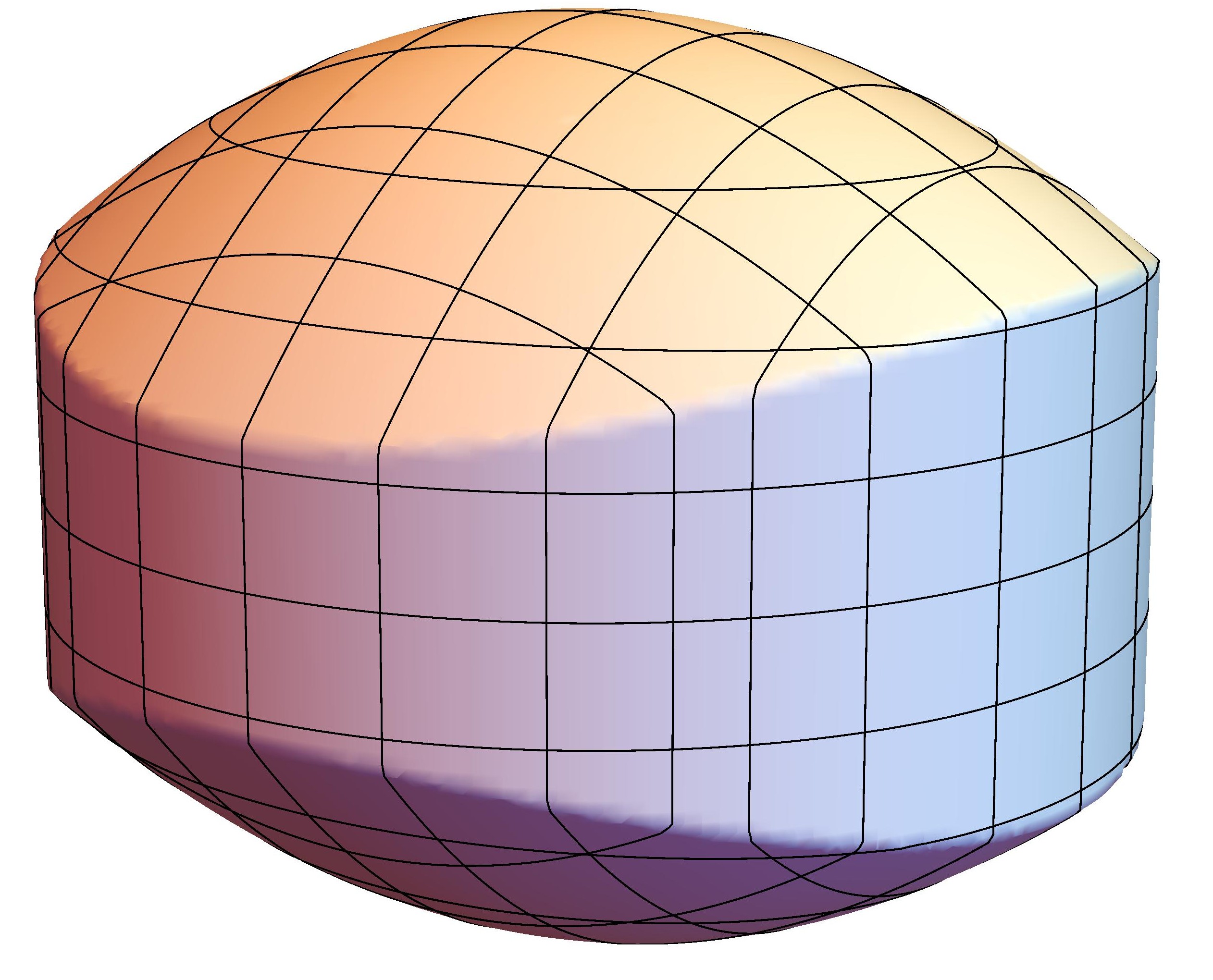}
\label{fig:egg}
\caption{A picture of the region $\Omega(\varepsilon)$}
\end{figure}

\begin{proof} First we discuss the existence of the set $\Omega(\eps)$ satisfying properties (i)-(iii). For $\eps>0$ small enough, the closure of the ball $B(p,\eps)$ is compact, hence for every point $x\in B(p,\eps)$ there exists a length-minimizer $\gamma_{\lambda} :[0,1] \to M$, associated with initial covector $\lambda =(\rho,\theta,w) \in T_p^*M$ joining $p$ with $x$. Recall that contact sub-Riemannian structures have no non-trivial abnormal minimizers, thus $\gamma_\lambda$ does not contain any abnormal segment and cannot be minimizing after its first conjugate time. Moreover, under the assumption that there are no non-trivial abnormal minimizers, a cut time is either the first conjugate time or a point where two optimal geodesics intersect. For a proof of these statements one can see \cite[Chapter 3, Chapter 8]{nostrolibro} or \cite[Appendix A]{BRinterpolation}.

For every unit initial covector $\lambda=(1,\theta,w)$ we have  $\gamma_{\lambda}(t)= \exp_{p}(t(1,\theta,w))=\exp_{p}(t,\theta,tw)$ and this trajectory is by definition a length-minimizer  up to the  corresponding cut time $t_{cut}(1,\theta,w)$. Notice that $\exp_{p}(0,\theta,w)=p$ for every $\theta,w$. We stress that for every $s\in ]0,t_{cut}(\lam)[$, the restriction $\gamma_{\lam}|_{[0,s]}$ is a length-minimizer hence does not contain neither cut points (by construction), nor conjugate points (by length-minimality). 

Let us introduce the star-shaped set in $ T^{*}_{p}M$
$$A=\{(s,\theta,sw)\in T^{*}_{p}M \mid \theta\in [0,2\pi], w\in \R, 0\leq s\leq t_{cut}(1,\theta,w)\},$$
and set $\Omega(\eps):=A\cap \{0\leq \rho\leq \eps\}$.
It follows by construction that $\exp_p : \Omega(\eps) \to \overline B(p,\eps)$ is onto. Moreover 
$$\exp_p (\mathrm{int} ( \Omega(\eps)\setminus\{\rho=0\})) = B(p,\eps)\setminus \mathrm{Cut}(p),$$ which has full measure in $B(p,\eps)$.
The fact that $\exp_p$ is injective with injective differential on the open set $\mathrm{int} ( \Omega(\eps)\setminus\{\rho=0\})$, is a consequence of the fact that length minimizers do not contain neither cut nor conjugate points.

To complete the proof of the statement, we compute the asymptotic description of the set $\Omega(\eps)$ in the cotangent space $T^{*}_{p}M$.
Let us rewrite the set $A$ as follows, in cylindrical coordinates\footnote{One can use the following identity, for $\varphi:[0,+\infty)\to [0,+\infty)$ an invertible function
$$\{(s,sw)\in \R^{2}\mid  w\in \R, 0\leq s \leq \varphi(|w|)\}=\{(x,y)\in \R^{2}\mid x\geq 0 ,  |y|\leq x\varphi^{-1}(x)\}.$$}
\begin{equation}\label{eq:disegnata}
A=\{(\rho,\theta,w)\in T^{*}_{p}M \mid \theta\in [0,2\pi], \rho\geq 0,  |w|\leq  \rho t^{-1}_{cut}(1,\theta,\rho)\},
\end{equation}
where $t^{-1}_{cut}(1,\theta,\cdot)$ means the inverse function of the map $w\mapsto t_{cut}(1,\theta,w)$, for a fixed $\theta \in [0,2\pi]$. Notice that the function $t_{cut}(1,\bar{\theta},\cdot)$ is smooth at infinity, for fixed $\bar \theta \in [0,2\pi]$, with derivative approaching a positive constant, and therefore it is invertible close to infinity. 

The expansion of $t^{-1}_{cut}(1,\theta,\cdot)$ at zero is then obtained from the one of $t_{cut}(1,\theta,\cdot)$ at infinity \eqref{eq:tcut}  as follows
\begin{equation} \label{eq:tmeno1}
t^{-1}_{cut}(1,\theta,\rho)=\frac{2\pi}{\rho} - \frac{(\kappa +2\chi\sin^2\theta)}{4\pi}\rho + O(\rho^2), \qquad \rho \to 0.
\end{equation}
Multiplying \eqref{eq:tmeno1} by $\rho$ and combining with \eqref{eq:disegnata}, one gets the statement by setting 
\begin{equation}
f(\theta)=\frac{\kappa +2\chi\sin^2\theta}{4\pi}
\qedhere
\end{equation} 
%\begin{equation}
%\label{eq:wcut}
%w  = \rho t_{cut}^{-1}(\theta,\rho).
%\end{equation}
%
%Let us find an asymptotic expansion for $t_{cut}^{-1}(\theta,\rho)$ as $\rho\to 0$. As above we write $\rho = t_{cut}(\theta,\bar{w})$ for some fixed $\theta$. The expansion \eqref{eq:tcut} in particular says that
%$$
%\rho \sim \frac{2\pi}{|\bar w|}, \qquad |\bar w| \to \infty, 
%$$
%and so
%$$
%\frac{1}{|\bar w|} \sim \frac{\rho}{2\pi}, \qquad \rho \to 0.
%$$
%Thus from (\ref{eq:tcut}) we obtain as $\rho\to 0$
%$$
%\frac{1}{|\bar w|} = \frac{\rho}{2\pi}+\frac{\pi(\kappa+2\chi \sin^{2}\theta)}{|\bar w|^{3}}+O(\bar w^{-4}) = \frac{\rho}{2\pi}+\frac{\kappa+2\chi \sin^{2}\theta}{2(2\pi)^{3}}\rho^3+O(\rho^{4}).
%$$
%Using the usual expansion of $1/(1+x)$ as $x\to 0$ we find
%$$
%|\bar w| = \dfrac{1}{\frac{\rho}{2\pi}\left( 1+\frac{\kappa+2\chi \sin^{2}\theta}{2(2\pi)^{2}}\rho^2+O(\rho^{3}) \right)} = \frac{1}{\rho}\left( 2\pi - \frac{(\kappa +2\chi\sin^2\theta)}{4\pi}\rho^2 + O(\rho^3) \right).
%$$
%And so for (\ref{eq:wcut}) we get
%$$
%|w| =  2\pi - \frac{(\kappa +2\chi\sin^2\theta)}{4\pi}\rho^2 + O(\rho^3), \qquad \rho \to 0.
%$$
%which completes the proof.
\end{proof}
	\section{Proof of main theorem}
	We compute the volume of the ball $B(p, \eps)$ in normal coordinates. Recall that in this coordinate chart the ball is denoted simply $B(\eps)$ and the Popp volume writes $\mu=\psi \,dx\,dy\,dz$. We have  	\begin{align} \mathrm{vol}(B(p, \eps))&=\int_{B(\eps)}\psi (x, y, z)\,dx\,dy\,dz\\
	&=\eps^4\int_{B^\eps(1)}\psi(\eps x, \eps y, \eps^2z) \,dx\,dy\,dz \quad \textrm{(using Lemma \ref{lemma:balls})}\\
	&=\eps^4\left(\int_{B^\eps(1)}(1-2\eps^2\gamma^{[2]}(x,y, z)) \,dx\,dy\,dz +O(\eps^3)\right)\quad \textrm{(using Lemma \ref{lemma:gamma})}
	\end{align}
Using again Lemma \ref{lemma:balls}, we can write
	\begin{align}B^\eps(1)&=\delta_{\frac{1}{\eps}}(B(\eps))\\
	&=\delta_{\frac{1}{\eps}}(\exp(\set(\eps))\quad (\textrm{by property (i) from Corollary \ref{cor:D}})\\
	&=\exp^\eps(\tau_{\frac{1}{\eps}}(\set(\eps))) \quad (\textrm{by Lemma \ref{lemma:diagram}})
	\end{align}
	Observe also that $\set^\eps(1)\doteq\tau_{\frac{1}{\eps}}(\set(\eps))$ has the following description in cylindrical coordinates:
	\beq \tau_{\frac{1}{\eps}}(\set(\eps))=\{|\rho|<1, \theta \in [0, 2\pi], w\in[-2\pi+\eps^2\rho^2f(\theta)+O(\eps^3), 2\pi-\eps^2\rho^2f(\theta)+O(\eps^3)].\eeq
	In particular we can write:
	\begin{align}\label{eq:int1}
	 \mathrm{vol}(B(p, \eps))&=\eps^4\left(\int_{\exp^\eps(\set^\eps(1))}(1-2\eps^2\gamma^{[2]}(x,y,z)) \,dx\,dy\,dz +O(\eps^3)\right).\\
	\end{align}
	Observe now that properties (ii) and (iii) from Corollary \ref{cor:D} remains true if we compose the various maps with a diffeomorphim, after considering the images of the corresponding sets under the diffeomorphism itself. In particular, since both $\delta_{\frac{1}{\eps}}$ and $\tau_{\frac{1}{\eps}}$ are diffeomorphisms, we can apply the change of variable formula and compute the integral in \eqref{eq:int1} as:
	\begin{align}
	\label{eq:int2}
	 \mathrm{vol}(B(p, \eps))=\eps^4\left(\int_{0}^1\int_{0}^{2\pi}\int_{-2\pi+\eps^2\rho^2f(\theta)+O(\eps^3)}^{2\pi-\eps^2\rho^2f(\theta)+O(\eps^3)}u(\rho, \theta, w)\,dw \, d\theta\, d\rho +O(\eps^3) \right),
	\end{align}
	where
	\beq u(\rho, \theta, w)=(1-2\eps^2\gamma^{[2]}(\exp^\eps(\rho, \theta, w))|\det (J\exp^\eps)(\rho, \theta, w)|.\eeq
	We compute now the expansion in $\eps$ of the various terms involved. Let us start with $u$, which using the expansion $\exp_{p}^\eps=\exp_p^0+O(\varepsilon)$ and \eqref{eq:expasymp} we can write as
	\begin{align}u(\rho, \theta, w)=& \det (J \exp_p^0)(\rho, \theta, w)+\\
	&+\eps^2\left(-2\gamma^{[2]}(\exp_p^0(\rho, \theta, w)) \det (J \exp_p^0)(\rho, \theta, w)+v_2(\rho, \theta, w)\right)+O(\eps^3)\\
	=&\,u_0(\rho, \theta, w)+\eps^2u_2(\rho, \theta, w)+O(\eps^3)
	\end{align}
	Observe now also that: 
	\begin{align}
	\int_{-2\pi+\eps^2\rho^2f(\theta)+O(\eps^3)}^{2\pi-\eps^2\rho^2f(\theta)+O(\eps^3)}u(\rho, \theta, w)dw=&
	\int_{-2\pi+\eps^2\rho^2f(\theta)+O(\eps^3)}^{2\pi-\eps^2\rho^2f(\theta)+O(\eps^3)}u_0(\rho, \theta, w)+\eps^2u_2(\rho, \theta, w)dw +O(\eps^3)\\
	=&\int_{-2\pi}^{2\pi}u_0(\rho, \theta, w)+\eps^2 u_2(\rho, \theta, w)dw+\\
	&-2 \eps^{2}\rho^2f(\theta)\left(u_0(\rho, \theta, -2\pi)+u_0(\rho, \theta, 2\pi)\right)+O(\eps^3)\\
	\label{eq:int3}=&\int_{-2\pi}^{2\pi}u_0(\rho, \theta, w)dw+\eps^2 \int_{-2\pi}^{2\pi}u_2(\rho, \theta, w)dw+O(\eps^3),
	\end{align}
	where in the last line we have used the crucial fact that $u_0(\rho, \theta, 2\pi)=u_0(\rho, \theta, -2\pi)=0$, as it can be immediately verified from \eqref{eq:expJ0}. Recall that $u_0=\det(J\exp_p^0)$ is the Jacobian determinant in the Heisenberg group. In more geometric terms, the last equality is saying that the cut time coincides also with the first conjugate time in the Heisenberg group.
	
	Consider now the fixed domain $\set=\{|\rho|\leq 1,\, \theta\in [0, 2\pi], \,w\in [-2\pi, 2\pi]\}$. Plugging \eqref{eq:int3} into \eqref{eq:int2}, we obtain:
	\beq\label{eq:int4} 
	 \mathrm{vol}(B(p, \eps))=\eps^4\left(\int_{\set} u_0+\eps^2\int_{\set}u_2+O(\eps^3)\right).\eeq
	From the definition of $u_0=\det(J\exp_p^0)$, we immediately recognize 
	\beq 
	c_{0}:=\int_{\set} u_0=\textrm{volume of the unit ball in the Heisenberg group}.\eeq
	For the integral of $u_2$, we proceed analyzing the various functions appearing in its definition
	\begin{align} u_2&=-2\gamma^{[2]}(\exp_p^0(\rho, \theta, w)) \det (J \exp_p^0)(\rho, \theta, w)+v_2(\rho, \theta, w).\end{align}
	Writing $\gamma^{[2]}(x,y, z)=ax^2+2bxy+cy^2$ as in \eqref{eq:ga2} and using \eqref{eq:heisexp}, we have: 
	\begin{align}-2\gamma^{[2]}(\exp_p^0) 
	\det (J \exp_p^0)%&=%-\frac{4 \rho^2 \sin\left(\frac{w}{2}\right)^2 \left(a+c+(a-c) \cos(2 \theta+w)+b \sin(2 \theta+w)\right)}{w^2}\, \det(J\exp_p^0)\\
	&=4(a+c)\frac{\rho^5\sin\left(\frac{w}{2}\right)^2\left(2\cos w+w\sin w-2\right)}{w^6}+\\
	& \quad -\frac{4 \rho^2 \sin\left(\frac{w}{2}\right)^2 \left((a-c) \cos(2 \theta+w)+b \sin(2 \theta+w)\right)}{w^2}\, \det(J\exp_p^0)\\
	\label{eq:g22}&=2\kappa(p)\, \frac{\rho^5\sin\left(\frac{w}{2}\right)^2\left(2\cos w+w\sin w-2\right)}{w^6}+\\
	&\quad  +\cos(2 \theta+w)g_1(\rho, w)+\sin(2 \theta+w)g_2(\rho, w),\end{align}
	where in the last line we have used the fact that $2(a+c)=\kappa(p)$ and that $\det(J\exp_p^0)$ only depends on $(\rho, w)$ (see the explicit expression \eqref{eq:expJ0}).
	Note in particular that, exchanging the order of integration and using the fact that for every fixed $w\in [0, 2\pi]$ the integrals $\int_{0}^{2\pi}\cos(2\theta+w)d\theta$ and $\int_{0}^{2\pi}\sin(2\theta+w)d\theta$ vanish, \eqref{eq:g22} implies:
	\begin{align} \int_\set -2\gamma^{[2]}(\exp_p^0)\det(J\exp_p^0)&=2\kappa(p)\, \int_{0}^1\int_{-2\pi}^{2\pi}2\pi\frac{\rho^5\sin\left(\frac{w}{2}\right)^2\left(2\cos w+w\sin w-2\right)}{w^6}dw\,d\rho\\
	\label{eq:int5}&= \kappa(p)\, \int_{-2\pi}^{2\pi} \frac{2\pi}{3}\frac{\sin\left(\frac{w}{2}\right)^2\left(2\cos w+w\sin w-2\right)}{w^6}dw.
	\end{align}
	Let us look now at the integral of the function $v_2$. Using its explicit expression \label{eq:v2} and integrating the $\theta$-variable first, we obtain:
	\begin{align} \int_\set v_2&= \int_\set  \rho^5\left(\kappa(p)\frac{g_0(w)}{2} + g_c(w)\cos 2\theta+ g_s(w)\sin 2\theta\right)\,dw \, d\theta\, d\rho\\
	\label{int:g01}&=\kappa(p)\int_{0}^1\int_{-2\pi}^{2\pi}\rho^5 2\pi\frac{g_0(w)}{2}dw d\rho\\
	&=\kappa(p)\, \int_{-2\pi}^{2\pi}\frac{\pi}{6}g_0(w)dw.
	\end{align}
	Combining \eqref{eq:int5} and \eqref{int:g01} we obtain:
	\begin{align}\int_{\set}u_2&=\kappa(p)\,\int_{-2\pi}^{2\pi}\left( \frac{2\pi}{3}\frac{\sin\left(\frac{w}{2}\right)^2\left(2\cos w+w\sin w-2\right)}{w^6}+\frac{\pi}{6}g_0(w)\right) dw\\
	&=\kappa(p)\int_{-2\pi}^{2\pi} \frac{\pi}{2}\frac{(5w\sin w-(w^2-8)\cos w-8)}{w^6}dw\\
	&= \kappa(p)\frac{1}{160}\left(\frac{1}{\pi^2}-2-4\pi \mathrm{Si}(2\pi)\right).
	\end{align}
	Together with \eqref{eq:int4} this finally gives:
	\beq
	\textrm{vol}(B(p, \eps))=\eps^4c_0\left(1-\kappa(p)c_1\eps^2+O(\eps^3)\right),\eeq
	where:
	\beq c_1=\frac{1}{c_0160}\left(2+4\pi \mathrm{Si}(2\pi)-\frac{1}{\pi^2}\right)>0\quad \textrm{and}\quad c_0=\frac{1}{12}(1+2\pi \mathrm{Si}(2\pi)).\eeq
The explicit formula of $c_{0}$, which is the volume of the unit ball in the Heisenberg group, coincides witht the one obtained in \cite[Remark 39]{ABB-Hausdorff}.
	%\section{Examples}
%Put the example of $SO(3)$ and try to see if we can compute it exactly (not just the expansion...)

\appendix
\newcommand{\Tor}{T}
\section{Remarks on curvature coefficients} \label{s:appendix}

The study of complete sets of invariants, connected with the problem of equivalence of 3D sub-Riemannian contact structures, has been previously considered in the literature in different context and with different languages, as for instance in \cite{hughen} and \cite{falbel}. 

In this appendix we recall the relation of the geometric invariants $\chi$ and $\kappa$ defined in Section~\ref{s:csrm}, with those used in \cite{hughen,falbel}. Notice that $\chi$ and $\kappa$ does not give a complete sets of invariants since there exists two (left-invariant) non-isometric sub-Riemannian structures with same $\chi$ and $\kappa$. An explicit formula for the isometry is given cf.\ \cite{miosr3d} (see also \cite[Remark 3.1]{falbel}).

\subsection{Invariants of a canonical connection} We extend the sub-Riemannian metric $g$ on $\distr$ to a global Riemannian structure (that we denote with the same symbol $g$) by promoting $X_0$ to an unit vector orthogonal to $\distr$.

We define the \emph{contact endomorphism} $J:TM\to TM$ by:
\begin{equation}
g(X,JY)=d\omega(X,Y),\qquad \forall X,Y\in \Gamma(TM). 
\end{equation}
Clearly $J$ is skew-symmetric w.r.t.\ to $g$. In the 3-dimensional  case, the previous condition forces $J^{2}=-\mathbb{I}$ on $\distr$ and $J(X_{0})=0$.

\begin{theorem}[canonical connection, \cite{blair,tanno89,falbel}]
There exists a unique linear connection $\nabla$  on $(M,\omega,g,J)$ such that
\begin{itemize}
\item[(i)] $\nabla\omega = 0$,
\item[(ii)] $\nabla X_0 = 0$,
\item[(iii)] $\nabla g = 0$,
\item[(iv)] $\Tor(X,Y) = d\omega(X,Y) X_0$ for any $X,Y \in \Gamma(\distr)$,
\item[(v)] $\Tor(X_0,JX) = -J \Tor(X_0,X)$ for any vector field $X \in \Gamma(TM)$,
\end{itemize}
where $\Tor$ is the torsion tensor of $\nabla$.
\end{theorem}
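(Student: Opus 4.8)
The plan is to reduce everything to the classical fact that a metric connection is \emph{uniquely} determined by its torsion tensor. Indeed, writing any candidate connection as $\nabla=\nabla^{g}+K$, where $\nabla^{g}$ is the Levi-Civita connection of the Riemannian extension $g$, the metric compatibility (iii) is equivalent to $g(K(X,Y),Z)$ being antisymmetric in the pair $(Y,Z)$, and then $K$ is recovered from the torsion $\Tor$ by the contorsion formula
\begin{equation}
g(K(X,Y),Z)=\tfrac{1}{2}\bigl(g(\Tor(X,Y),Z)-g(\Tor(Y,Z),X)+g(\Tor(Z,X),Y)\bigr).
\end{equation}
Thus the statement splits into two tasks: first, showing that conditions (ii)--(v) force a single choice of $\Tor$ (this gives uniqueness); second, checking that the metric connection attached to this $\Tor$ by the formula above satisfies all of (i)--(v) (this gives existence). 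I would also note at the outset that (i) is not independent: since $X_0$ is the $g$-unit normal to $\distr$, one has $\omega=g(X_0,\cdot)$, so $\nabla\omega=0$ follows from (ii) and (iii).

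To determine $\Tor$ I would use the splitting $TM=\distr\oplus\mathbb{R}X_0$ together with the antisymmetry of the torsion. On $\distr\times\distr$ the torsion is prescribed outright by (iv), so it remains to pin down the endomorphism $\tau\colon X\mapsto\Tor(X_0,X)$ of $\distr$. First I would check that $\tau$ maps $\distr$ into $\distr$: pairing with $X_0$ and using $\nabla X_0=0$, metric compatibility gives $g(\Tor(X_0,X),X_0)=(\mathcal{L}_{X_0}g)(X,X_0)=-\omega([X_0,X])$, which vanishes because $X_0\in\ker d\omega$ and $\omega|_{\distr}=0$. The same computation for $X,Y\in\distr$ yields
\begin{equation}
g(\Tor(X_0,X),Y)+g(X,\Tor(X_0,Y))=(\mathcal{L}_{X_0}g)(X,Y),
\end{equation}
so the $g$-symmetric part of $\tau$ equals $\tfrac{1}{2}\mathcal{L}_{X_0}g|_{\distr}$. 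Finally I would show that condition (v), namely $\tau J=-J\tau$, kills the antisymmetric part of $\tau$: an antisymmetric bilinear form on the $2$-plane $\distr$ is a multiple of the area form $g(J\cdot,\cdot)$, and the relation $\tau J=-J\tau$ together with $J^{2}=-\mathbb{I}$ (which makes $J$ orthogonal on $\distr$) forces that multiple to vanish. Hence $\tau$, and therefore the full torsion, is uniquely determined, establishing uniqueness.

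For existence I would define $\nabla$ via the contorsion formula applied to this now-explicit $\Tor$ (an alternative is to write down Tanno's explicit modification of $\nabla^{g}$ directly). By construction $\nabla$ is metric with torsion $\Tor$, so (iii), (iv), (v) hold, and (i) follows as noted. The one remaining point is (ii), $\nabla X_0=0$. I expect this verification to be the main obstacle, since it is precisely where the consistency of the torsion we extracted with the \emph{defining} relation $\nabla X_0=0$ must genuinely be checked rather than assumed: concretely, one expands $g(\nabla_Y X_0,Z)=g(\nabla^{g}_Y X_0,Z)+g(K(Y,X_0),Z)$ and verifies the two contributions cancel, recalling that $\nabla^{g}X_0$ is itself expressible through $\mathcal{L}_{X_0}g$ and $d\omega$. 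I would carry this out first along $\distr$ and then along $X_0$, in each case reducing to the identities for $\tau$ derived in the uniqueness step.
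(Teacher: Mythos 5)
The paper itself offers no proof of this statement --- it is imported wholesale from \cite{blair,tanno89,falbel} --- so your attempt can only be measured against the classical arguments in those references, and your strategy (peel off the Levi-Civita connection $\nabla^g$ of the extended metric, reduce everything to determining the torsion, recover the connection by the contorsion formula) is exactly that classical route. Your uniqueness half is complete and correct: the observation that (i) is redundant given (ii)+(iii) because $\omega=g(X_0,\cdot)$ is right; the torsion on $\distr\times\distr$ is prescribed by (iv); your computations that $\tau=T(X_0,\cdot)$ preserves $\distr$ and that its $g$-symmetric part equals $\tfrac12\mathcal{L}_{X_0}g|_{\distr}$ are correct; and on the $2$-plane $\distr$ the antisymmetric part of $\tau$ is $cJ$, which satisfies $(cJ)J+J(cJ)=-2c\,\mathbb{I}$, so (v) indeed forces $c=0$. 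Together with the contorsion formula this pins down $\nabla$ uniquely among metric connections.

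The existence half, however, has two genuine loose ends. First, your claim that (v) holds ``by construction'' is inaccurate: for a \emph{symmetric} endomorphism $\tau$ of a $2$-plane one has $\tau J+J\tau=\mathrm{trace}(\tau)\,J$, so (v) holds for your candidate $\tau=\tfrac12\mathcal{L}_{X_0}g|_{\distr}$ if and only if $\mathrm{trace}_g\left(\mathcal{L}_{X_0}g|_{\distr}\right)=0$. This is a nontrivial property of the Reeb field, following from $\mathcal{L}_{X_0}d\omega=0$ together with the normalization $d\omega|_{\distr}=$ area form of $g$; the paper records it as $c_{01}^1+c_{02}^2=0$, equivalently $\mathrm{trace}(\tau)=0$. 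It cannot be bypassed: condition (v) forces \emph{both} $c=0$ and trace-freeness of the symmetric part, so if this Reeb identity failed, conditions (ii)--(v) would be mutually inconsistent and no connection would exist at all. Second, the verification of (ii), which you yourself flag as the main obstacle, is only announced, not carried out. It does succeed: for horizontal $Y,Z$ the Koszul formula gives $g(\nabla^g_YX_0,Z)=\tfrac12(\mathcal{L}_{X_0}g)(Y,Z)+\tfrac12 d\omega(Y,Z)$, which is cancelled exactly by the contorsion term $g(K(Y,X_0),Z)=-\tfrac12(\mathcal{L}_{X_0}g)(Y,Z)-\tfrac12 d\omega(Y,Z)$, while the cases $Y=X_0$ or $Z=X_0$ reduce to $\omega([X_0,Z])=-d\omega(X_0,Z)=0$ and to metricity, respectively. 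So every step of your plan can be completed, but as written the existence proof is incomplete at precisely these two points and should not be presented as finished.
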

If $X$ is a horizontal vector field, so is $\Tor(X_0,X)$. As a consequence, if we define $\tau(X) = \Tor(X_0,X)$, $\tau$ is a symmetric horizontal endomorphism which satisfies $\tau \circ J + J \circ \tau = 0$, by property (v). Notice that $\mathrm{trace}(\tau)=0$ and $\det (\tau)\leq 0$.

A standard computation gives the following result. 
\begin{lemma}\label{l:krrr}
Let $R^{\nabla}$ be the curvature associated with the  connection $\nabla$. Then
\begin{equation} \label{eq:krrr}
\kappa=R^{\nabla}(X_{1},X_{2},X_{2},X_{1}),\qquad \chi=\sqrt{-\mathrm{det}(\tau)}.
\end{equation}
 \end{lemma}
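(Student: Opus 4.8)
The plan is to determine the connection coefficients of $\nabla$ explicitly in the orthonormal frame $X_0,X_1,X_2$ and then read off both identities in \eqref{eq:krrr} by direct substitution into the definitions \eqref{eq:defchi2} and \eqref{eq:defkappa}. Since the cited theorem already guarantees existence and uniqueness of $\nabla$, I only need to exhibit the coefficients compatible with (i)--(v). Because $\nabla g=0$ and $\nabla X_0=0$, the connection preserves the splitting $TM=\distr\oplus\R X_0$ and acts on $\distr$ through a single $\mathfrak{so}(2)$-valued connection form; hence there are smooth functions $\alpha_0,\alpha_1,\alpha_2$ with $\nabla_{X_i}X_1=\alpha_i X_2$ and $\nabla_{X_i}X_2=-\alpha_i X_1$ for $i=0,1,2$, and the whole problem reduces to pinning down these three functions.

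First I would fix the horizontal coefficients. Expanding $\Tor(X_1,X_2)=\nabla_{X_1}X_2-\nabla_{X_2}X_1-[X_1,X_2]$ with the structure equations \eqref{eq:algebracampi} and the normalization $d\omega(X_1,X_2)=1$, property (iv) forces $\alpha_1=c_{12}^1$ and $\alpha_2=c_{12}^2$. To determine $\alpha_0$ I would first record the contact endomorphism: from $g(X,JY)=d\omega(X,Y)$ one reads off $JX_1=-X_2$ and $JX_2=X_1$. Computing $\Tor(X_0,X_1)$ and $\Tor(X_0,X_2)$ from the coefficients and imposing property (v) in the form $\Tor(X_0,JX_1)=-J\,\Tor(X_0,X_1)$ yields $\alpha_0=\tfrac12(c_{02}^1-c_{01}^2)$; the $X_2$-component of the same identity reproduces the trace relation $c_{01}^1+c_{02}^2=0$, which serves as a consistency check.

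With all coefficients in hand, the two halves of \eqref{eq:krrr} follow from two separate but routine computations. For the curvature, using the convention $R^{\nabla}(X,Y)=\nabla_X\nabla_Y-\nabla_Y\nabla_X-\nabla_{[X,Y]}$ and $R^{\nabla}(X,Y,Z,W)=g(R^{\nabla}(X,Y)Z,W)$, I would expand $R^{\nabla}(X_1,X_2)X_2$: the $X_2$-components cancel and pairing the $X_1$-component against $X_1$ leaves $X_2(c_{12}^1)-X_1(c_{12}^2)-(c_{12}^1)^2-(c_{12}^2)^2-\alpha_0$, which is exactly $\kappa$ by \eqref{eq:defkappa} once $-\alpha_0=\tfrac12(c_{01}^2-c_{02}^1)$ is inserted. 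For the torsion, the endomorphism $\tau(X)=\Tor(X_0,X)$ has matrix $\begin{psmallmatrix} c_{01}^1 & (c_{01}^2+c_{02}^1)/2 \\ (c_{01}^2+c_{02}^1)/2 & c_{02}^2\end{psmallmatrix}$ in the frame $X_1,X_2$; it is manifestly symmetric and, by $c_{01}^1+c_{02}^2=0$, trace-free, with $-\det\tau=(c_{01}^1)^2+\tfrac14(c_{01}^2+c_{02}^1)^2=\chi^2$, matching \eqref{eq:defchi2}.

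The individual calculations are elementary, so the only genuine obstacle is bookkeeping: keeping the sign and ordering conventions for $R^{\nabla}$, the action of $J$, and the brackets $[X_i,X_j]$ consistent throughout, and invoking the identity $c_{01}^1+c_{02}^2=0$ at the right moments. No single step is deep, but an error in any one convention propagates directly into a wrong sign in $\kappa$ or $\chi$, so I would carry the signs carefully from the definition of the torsion through to the final comparison.
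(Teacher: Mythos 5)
Your proof is correct and is exactly the ``standard computation'' that the paper invokes without writing out: since $\nabla g=0$ and $\nabla X_0=0$ reduce $\nabla$ to an $\mathfrak{so}(2)$-valued form on $\distr$, conditions (iv)--(v) pin down $\alpha_1=c_{12}^1$, $\alpha_2=c_{12}^2$, $\alpha_0=\tfrac12(c_{02}^1-c_{01}^2)$, and your curvature and torsion computations then reproduce \eqref{eq:defkappa} and the matrix of $\tau$ precisely (the same frame-based convention for $R^{\nabla}$ is used in the paper's proof of Proposition~\ref{p:secR}). One small remark: your final expression $-\det\tau=(c_{01}^1)^2+\tfrac14(c_{01}^2+c_{02}^1)^2$ agrees with the definition of $\chi$ as it appears consistently elsewhere in the paper (e.g.\ as $\sqrt{-\det\{H,h_0\}}$ and in the proof of Proposition~\ref{p:secR}), while the displayed formula \eqref{eq:defchi2} contains an index typo ($c_{02}^{2}$ there should read $c_{02}^{1}$), so your claim of ``matching \eqref{eq:defchi2}'' is right in substance even though it does not match that display literally.
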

Notice that a contact structure is $K$-type if and only if $X_0$ is a Killing vector field or, equivalently, if and only if $\tau =0$.

	\subsection{Relation with other invariants in the literature}

Let us denote by $g$ the Riemannian metric on $M$ obtained by declaring the Reeb vector field $X_{0}$ to be orthogonal to the distribution and of unit norm and denote by $\overline \nabla$ the Levi-Civita connection associated with the Riemannian metric $g$. The Christoffel symbols $\overline \Gamma_{ij}^{k}$ of this connections are defined by
\begin{equation}\label{eq:covariant}
\overline \nabla_{X_{i}}X_{j}=\overline \Gamma_{ij}^{k}X_{k}, \qquad \forall', i,j=0,1,2,
\end{equation} 
and related with the structural functions of the frame by the following formulae:
\begin{equation}
\overline \Gamma_{ij}^{k}=-\frac{1}{2}(c_{ij}^{k}-c_{jk}^{i}+c_{ki}^{j}).
\end{equation}
Let us denote by $\mathrm{Sec}(\Pi_x)$ the sectional curvature with respect to $\overline \nabla$ of the plane $\Pi_x$ generated by two vectors $v,w\in T_{x}M$.
\begin{proposition} \label{p:secR}
The sectional curvature of the plane $\Pi_x = \distr_x$ is
\begin{equation}\label{eq:sec}
\mathrm{Sec}(\distr_{x})=\kappa+\chi^{2}-\frac34.
\end{equation}
\end{proposition}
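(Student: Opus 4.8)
The plan is to compute $\mathrm{Sec}(\distr_x)$ directly from the Riemann curvature of the Levi--Civita connection $\overline\nabla$, exploiting that $\{X_1,X_2\}$ is an orthonormal basis of $\distr_x$. Since the two vectors are orthonormal, the sectional curvature reduces to
$$\mathrm{Sec}(\distr_x)=g\big(\overline R(X_1,X_2)X_2,X_1\big),\qquad \overline R(X,Y)Z=\overline\nabla_X\overline\nabla_Y Z-\overline\nabla_Y\overline\nabla_X Z-\overline\nabla_{[X,Y]}Z.$$
Every ingredient here is algebraic in the structural functions $c_{ij}^k$ and their first derivatives, so the whole computation amounts to assembling these from the data in \eqref{eq:algebracampi}.

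First I would tabulate the relevant Christoffel symbols $\overline\Gamma_{ij}^k$ via \eqref{eq:covariant}, using the structural equations \eqref{eq:algebracampi}, the antisymmetry $c_{ij}^k=-c_{ji}^k$, and the vertical constant $c_{12}^0=1$. Only a handful of covariant derivatives are needed; a direct substitution gives, for instance,
$$\overline\nabla_{X_1}X_2=-\tfrac12\big(1+c_{01}^2+c_{02}^1\big)X_0-c_{12}^1 X_1,\qquad \overline\nabla_{X_2}X_2=-c_{02}^2 X_0-c_{12}^2 X_1.$$
I would then expand the three terms of $\overline R(X_1,X_2)X_2$. The bracket term uses $[X_1,X_2]=-[X_2,X_1]=-X_0-c_{12}^1 X_1-c_{12}^2 X_2$, which is where the vertical direction enters; differentiating the structural functions inside $\overline\nabla_{X_1}\overline\nabla_{X_2}X_2-\overline\nabla_{X_2}\overline\nabla_{X_1}X_2$ produces precisely the terms $X_2(c_{12}^1)-X_1(c_{12}^2)$.

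Finally I would collect the coefficient of $X_1$ and simplify using the trace relation $c_{01}^1+c_{02}^2=0$. I expect the derivative terms together with the purely horizontal quadratic terms to assemble into $X_2(c_{12}^1)-X_1(c_{12}^2)-(c_{12}^1)^2-(c_{12}^2)^2$, i.e.\ the first part of \eqref{eq:defkappa}; the terms quadratic in $c_{01}^\bullet,c_{02}^\bullet$ should reorganize into $\chi^2=(c_{01}^1)^2+\tfrac14(c_{01}^2+c_{02}^1)^2$ (read off from the quadratic form $\{H,h_0\}$), while the residual linear combination completes the summand $\tfrac{c_{01}^2-c_{02}^1}{2}$ of $\kappa$; and the vertical coefficient $-\tfrac12$ appearing in the covariant derivatives, paired against the $X_0$ part of the bracket, should produce the universal constant $-\tfrac34$. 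The main obstacle is therefore not conceptual but the sign- and index-bookkeeping: because the $c_{ij}^k$ are genuine functions one must carefully track which covariant derivative differentiates them, and the final regrouping into exactly the combinations defining $\kappa$ and $\chi$ (including the specific $\tfrac{c_{01}^2-c_{02}^1}{2}$ term) is the delicate step. A clean sanity check is the Heisenberg group, where $\kappa=\chi=0$ and the formula must return $-\tfrac34$.
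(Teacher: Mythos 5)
Your proposal is correct and follows essentially the same route as the paper's proof: a direct computation of $g(\overline R(X_1,X_2)X_2,X_1)$ via the Levi-Civita Christoffel symbols \eqref{eq:covariant} expressed in the structure constants of \eqref{eq:algebracampi}, then regrouping into $\kappa$, $\chi^2=(c_{01}^1)^2+\tfrac14(c_{01}^2+c_{02}^1)^2$, and the universal constant $-\tfrac34$ (your intermediate covariant derivatives and the Heisenberg sanity check are all consistent with the expression the paper obtains).
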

\begin{proof}
It is a long but straightforward computation, using the explicit expression of the covariant derivatives \eqref{eq:covariant}. In terms of an orthonormal frame $X_{1},X_{2}$ for the distribution $\distr_{x}$ we have 
\begin{align*}
\mathrm{Sec}(\distr_{x})&=g(\nabla_{X_{1}}\nabla_{X_{2}}X_{2}-\nabla_{X_{2}}\nabla_{X_{1}}X_{2}
-\nabla_{[X_{1},X_{2}]}X_{2},X_{1})\\
&=-X_{1}(c_{12}^{2})+X_{2}(c_{12}^{1})-(c_{12}^{1})^{2}-(c_{12}^{2})^{2}+\frac{1}{2}(c_{01}^{2}-c_{02}^{1})%+\\
%&\quad 
+(c_{01}^1)^{2}+\frac{1}{4}(c_{02}^{1}+c_{01}^{2})^{2}-\frac{3}{4},
\end{align*}
and \eqref{eq:sec} follows from the explicit expressions \eqref{eq:defchi2} and \eqref{eq:defkappa} of $\chi$ and $\kappa$.
\end{proof}
In \cite{hughen}, using the Cartan's moving frame method, Hughen introduces the family of generating invariants $a_{1},a_{2}, K \in C^\infty(M)$. 
\begin{proposition}[Relation with invariants defined by Hughen]\label{prop:hughen}
\label{p:hughen}
We have the following identity
\begin{equation}\label{eq:secsec}
\kappa=K, \qquad \chi =\sqrt{a_{1}^{2}+a_{2}^{2}}.
\end{equation}
\end{proposition}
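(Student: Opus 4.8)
The plan is to treat this as a translation between two equivalent encodings of the same geometric data: Hughen's output of Cartan's moving-frame method on one side, and the structure constants $c_{ij}^k$ of the adapted orthonormal frame $\{X_0, X_1, X_2\}$ on the other. Since $\chi$ and $\kappa$ are already known to be frame-independent scalar invariants (Remark \ref{r:chipositivo}(i)) and Hughen's $a_1, a_2, K$ are likewise invariants attached to the contact structure, it suffices to compute both families in a single conveniently chosen coframe and match coefficients.

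First I would pass from the frame $\{X_0, X_1, X_2\}$ to its dual coframe $\{\nu_0, \nu_1, \nu_2\}$ and compute the structure equations $d\nu_k$ by Cartan's formula $d\nu_k(X_i, X_j) = -\nu_k([X_i, X_j])$, reading the brackets directly from \eqref{eq:algebracampi}. The contact normalization yields $d\nu_0 = \nu_1 \wedge \nu_2$, while $d\nu_1$ and $d\nu_2$ produce the terms carrying $c_{01}^i, c_{02}^i, c_{12}^i$. This places the coframe into precisely the form in which Hughen runs the equivalence method, so that his successive reductions of the structure group and his normalizations can be applied verbatim.

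Next I would identify Hughen's invariants inside these equations. The torsion coefficients $a_1, a_2$ record the failure of the Reeb flow to be an isometry; that is, they are the entries of the symmetric trace-free horizontal endomorphism $\tau$ of Lemma \ref{l:krrr}. Since such a matrix satisfies $\det\tau = -(a_1^2 + a_2^2)$, the identity $\chi = \sqrt{-\det\tau} = \sqrt{a_1^2 + a_2^2}$ follows at once, in agreement with definition \eqref{eq:defchi2}. The remaining invariant $K$ is the coefficient of $\nu_1 \wedge \nu_2$ in the curvature of Hughen's adapted connection, which by Lemma \ref{l:krrr} equals $R^\nabla(X_1, X_2, X_2, X_1) = \kappa$; Proposition \ref{p:secR} furnishes an independent cross-check via the Riemannian sectional curvature $\Sec(\distr_x) = \kappa + \chi^2 - \tfrac34$.

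The hard part will be bookkeeping the conventions of Cartan's method. Hughen's invariants live a priori on a prolonged bundle, and one must verify that his terminal adapted coframe coincides with $\{\nu_0, \nu_1, \nu_2\}$ up to the residual $SO(2)$ rotation the method leaves free, while tracking the signs and numerical factors introduced by each group reduction. The rotation ambiguity is harmless for $\chi$, because $a_1^2 + a_2^2$ is $SO(2)$-invariant, and harmless for $K = \kappa$, because both are genuine scalars by Remark \ref{r:chipositivo}(i); the only substantive work is to align Hughen's normalization of $d\omega$ and of the connection form with those fixed in Section \ref{s:csrm}.
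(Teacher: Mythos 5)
Your strategy --- re-running Cartan's equivalence method in the coframe dual to $\{X_0,X_1,X_2\}$ and matching Hughen's outputs against the torsion and curvature of the canonical connection --- is viable in principle, but as written it contains a genuine gap: the two identifications that carry all the weight (that Hughen's $a_1,a_2$ are the matrix entries of $\tau=T^{\nabla}(X_0,\cdot)$ in an orthonormal horizontal frame, and that his $K$ is the horizontal curvature coefficient of a connection coinciding with the $\nabla$ of Appendix~\ref{s:appendix}) are asserted, not proven. These identifications \emph{are} the content of the proposition. You yourself flag the alignment of Hughen's normalizations, his group reductions, and the residual $SO(2)$ freedom as ``the only substantive work,'' but that work is never carried out; saying that $a_1,a_2$ ``record the failure of the Reeb flow to be an isometry'' is a heuristic, not an argument, and nothing in the proposal rules out that Hughen's adapted connection differs from $\nabla$ by a torsion term or a scaling convention in $d\omega$, which would change the numerical identification.

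The paper closes exactly this gap by a different, indirect route: it compares the two families through invariantly defined scalars, citing computations already done in Hughen's paper. For the first identity, Hughen proves $K=4W$ with $W$ the Tanaka--Webster curvature of the associated CR structure, while Lemma~\ref{l:krrr} gives $\kappa=4W$; hence $\kappa=K$. For the second, Hughen proves $\mathrm{Sec}(\distr_x)=K+a_1^2+a_2^2-\tfrac34$ for the Riemannian extension of the metric, while Proposition~\ref{p:secR} gives $\mathrm{Sec}(\distr_x)=\kappa+\chi^2-\tfrac34$; subtracting and using $\kappa=K$ yields $\chi^2=a_1^2+a_2^2$. Because both intermediaries ($W$ and $\mathrm{Sec}(\distr_x)$) are frame- and convention-independent, no bookkeeping of Cartan reductions is needed --- precisely the step your proposal leaves open. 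Note also that your suggested use of Proposition~\ref{p:secR} as a ``cross-check'' becomes an actual proof only if you import Hughen's sectional-curvature formula, which your proposal never invokes.
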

\begin{proof}

The author in \cite[p.15]{hughen} proves that $K=4W$, where $W$ is the Tanaka-Webster curvature of the CR structure associated with the sub-Riemannian one. Notice that also that $\kappa=4W$ from Lemma~\ref{l:krrr}, hence $\kappa=K$.   Moreover one has \cite[p.15]{hughen} 
\begin{equation}
\mathrm{Sec}(\distr_{x})=K+a_{1}^{2}+a_{2}^{2}-\frac34.
\end{equation}
This, together with Proposition \ref{p:secR}, gives the other relation 
$ \chi^{2} =a_{1}^{2}+a_{2}^{2}.$
\end{proof}
	
\begin{remark}[Relation with invariants defined by Falbel-Gorodski]\label{rem:falbel}
In \cite{falbel}, the authors introduce a family of generating invariants $K,\tau_{0},W_{1},W_{2}$, associated with this connection. It follows directly from Lemma~\ref{l:krrr} that $\kappa=K$ and $\chi=\tau_{0}$.
\end{remark}	
	\bibliographystyle{abbrv}
	\bibliography{biblio}	
\end{document}